\definecolor{red}{HTML}{DC2626}  % Firebrick
\definecolor{blue}{HTML}{2563EB} % DarkBlue
\definecolor{grey}{HTML}{A1A1AA}
\newtheorem{theorem}{Theorem}
\newtheorem{lemma}[theorem]{Lemma}
\newtheorem{proposition}[theorem]{Proposition}
\begin{document}

\title{Alignment Games}

\author{
  Pedro Cesar Lopes Gerum\thanks{Cleveland State University, \texttt{p.lopesgerum@csuohio.edu}} 
  \and 
  Thomas Lidbetter\thanks{Rutgers Business School, \texttt{tlidbetter@business.rutgers.edu} (Corresponding author)}
}

\date{} % leave empty for no date

\maketitle
        
\begin{abstract}

\noindent This paper introduces {\em alignment games}, a new class of zero-sum games modeling strategic interventions where effectiveness depends on alignment with an underlying hidden state. Motivated by operational problems in medical diagnostics, economic sanctions, and resource allocation, this framework features two players, a Hider and a Searcher, who choose subsets of a given space. Payoffs are determined by their misalignment (symmetric difference), explicitly modeling the trade-off between commission errors (unnecessary action) and omission errors (missed targets), given by a cost function and a penalty function, respectively.

We provide a comprehensive theoretical analysis, deriving closed-form equilibrium solutions that contain interesting mathematical properties based on the game's payoff structure. When {\em cost and penalty functions} are unequal, optimal strategies are consistently governed by cost-penalty ratios. On the unit circle, optimal arc lengths are direct functions of this ratio, and in discrete games, optimal choice probabilities are proportional to element-specific ratios.

When costs are equal, the solutions exhibit rich structural properties and sharp threshold behaviors. On the unit interval, this manifests as a geometric pattern of minimal covering versus maximal non-overlapping strategies. In discrete games with cardinality constraints, play concentrates on the highest-cost locations, with solutions changing discontinuously as parameters cross critical values.

Our framework extends the theory of geometric and search games and is general enough that classical models, such as Matching Pennies, emerge as special cases. These results provide a new theoretical foundation for analyzing the strategic tension between comprehensive coverage and precise targeting under uncertainty.
\end{abstract}
        
\bigskip
\noindent\textbf{Keywords:} Game Theory; Geometric Games; Search Games; Zero-sum Games

\section{Introduction}

Decision-makers often confront a fundamental tension when the true state of a system remains hidden: intervention carries the risk of causing harm where it is not needed, while inaction may allow problems to worsen. In medical diagnosis, unnecessary biopsies cause patient suffering and complications, while missed diagnoses permit diseases to progress. In ecological management, pesticides applied to uninfested areas poison native species, while untreated invasions spread unchecked. In international relations, economic sanctions misapplied to compliant actors damage diplomatic relationships and harm civilian populations, while failing to sanction violators allows prohibited activities to continue. This trade-off between commission errors (harmful intervention) and omission errors (harmful inaction) appears wherever hidden states govern optimal interventions.

This paper introduces {\em alignment games}, a class of zero-sum games that captures this strategic tension. In an alignment game, a Searcher chooses a subset $S$ of a ground set $Q$, seeking to match the subset $H$ chosen by a Hider. The payoff to the Hider (which the Searcher minimizes) depends on both types of misalignment: commission errors $S \setminus H$ and omission errors $H \setminus S$. The payoff of an alignment game takes the form $P(H, S) = C(S \setminus H) + \Pi(H \setminus S)$, where $C$ measures the cost of commission errors and $\Pi$ the penalty for omission errors. The Searcher seeks to align intervention with reality; the Hider benefits from misalignment. The Hider may be a malicious adversary, or a stand-in for ``Nature'', which conspires to present a worst-case scenario for the Searcher.

The structure of alignment problems varies fundamentally with the domain. In continuous settings, players choose subsets of intervals or regions: border patrol must decide where to establish checkpoints along a frontier, creating congestion and economic disruption at chosen locations, while smugglers select crossing points; conservation teams apply controlled fires to forest sections, destroying habitats where fires were unnecessary, while areas requiring fire management remain vulnerable to catastrophic wildfires. Here, strategy spaces {\em have infinite cardinality}, and optimal play often involves probability densities over the continuum.

In discrete settings, players select from finite collections: fact-checkers targeting demographic groups risk introducing misinformation to previously unexposed audiences (where corrections can backfire and create new believers), while existing misinformation continues spreading in uncorrected populations; public health officials implementing quarantine zones disrupt communities and economies in designated areas, while disease spreads in uncontrolled regions. These discrete games yield different mathematical structures, where optimal strategies may concentrate on particular elements or spread uniformly across the ground set. Of course, our models are an oversimplification of the complex reality of the examples described above, but they serve as motivating problems where, in each case, the decision maker may be considered to be playing a game against Nature or against an adversary.

Alignment games extend established frameworks in game theory. \cite{ruckle1983geometric} introduced {\em geometric games} where two players choose subsets of a set, with payoffs determined by the chosen sets and their intersection.  In the words of \cite{ruckle1983geometric}:

\begin{displayquote} Two antagonists, known hereafter as RED and BLUE, choose [as pure strategies] subsets $R$ and $B$ respectively of a set $S$. BLUE then receives from RED a payoff which is a function of the triple $R$, $B$, and $R \cap B$ \ldots In general, RED and BLUE may not choose any subset of S, but rather RED must select from a collection $\mathcal{R}$ of admissible pure strategies for RED and BLUE from a collection $\mathcal{B}$. \end{displayquote}

Other works on geometric games include \cite{baston1989number}, \cite{zoroa2003raid} and \cite{zoroa1999generalization}. Unlike traditional geometric games that reward successful ``hits'' (intersections), alignment games penalize both types of misalignment through the symmetric difference of the players' sets. The study of geometric games led to the development of {\em accumulation games} \citep{alpern2014accumulation, alpern2010ruckle, kikuta2002continuous} and {\em caching games} \citep{alpern2012search, csoka2016solution, janosik2025caching}, where a Searcher seeks to accumulate a minimum threshold of resources secreted by a Hider.

All the classes of games mentioned above may be positioned in the wider field of {\em search games}, pioneered by \cite{bram19632} and \cite{Isaacs-Book-1965}, where a Searcher attempts to locate a Hider or hidden targets. However, search games typically minimize detection time or search cost, while alignment games balance the dual harms of misplaced intervention and missed opportunities. See \cite{alpern&gal03book}, \cite{garnaev2012search} and \cite{hohzaki2016search} for overviews of the field of search games.

There is also a relation to {\em inspection games} \citep{avenhaus1996inspection, avenhaus2002inspection, hohzaki2011inspection, von2016recursive} and {\em patrolling games} \citep{alpern2019optimizing, bui2023optimal,garrec2019continuous}. Inspection games focus on violation detection in regulatory contexts and patrolling games are concerned with detected an unwelcome intrusion, whereas alignment games address the broader challenge of matching interventions to hidden states where both action and inaction carry consequences.

The commission/omission framework is analogous to the concept of Type I and Type II error trade-off from hypothesis testing, albeit in a game-theoretic setting. Moreover, alignment games generalize the classic game of Matching Pennies. Where Matching Pennies rewards an exact match between binary choices, our framework extends to arbitrary set selections with payoffs that are a function of the degree of misalignment. Section~\ref{sec:equal} makes this connection precise, showing how Matching Pennies emerges as a special case when both players choose single elements from a two-element set.

This paper derives closed-form equilibrium solutions (min-max and max-min mixed strategies) for alignment games across multiple settings. For continuous domains, we solve games on the circle and unit interval under various cost structures. For discrete domains, we analyze games where players choose from power sets, face cardinality constraints, or select single elements. Our analysis reveals rich strategic behavior: threshold effects where optimal strategies shift discontinuously with parameter changes, symmetric equilibria that break into asymmetric ones as costs diverge, and connections between the geometric structure of the ground set and the support of optimal mixed strategies.

In Section~\ref{sec:intro}, we will define alignment games formally. In Section~\ref{sec:continuous} we will introduce some alignment games played in continuous domains: in particular, the unit interval or the circle. We give solutions of different variants of the game in both these domains. We move to discrete games in Section~\ref{sec:discrete}, starting with the case that the cost and penalty functions are different. We give a solution for the case that both players' strategy sets contain all possible subsets of the ground set. We then consider the special case of equal cost and penalty functions in Section~\ref{sec:equal}, and give solutions in the case that the one player can choose any subset of the ground set and the other player must choose a subset of a fixed cardinality. We also solve the case where both players may choose a single element of the ground set.

\section{Game Definition} \label{sec:intro}

We consider a family of zero-sum games between a Hider and a Searcher. The game is played on a set $Q$ equipped with two set functions $C, \Pi:2^Q \rightarrow \mathbb{R}$. We refer to $C$ as the  {\em cost function} and to $\Pi$ as the {\em penalty function}. The Hider's pure strategy set is a collection $\mathcal{H} \subseteq 2^Q$ of subsets of $Q$, and the Searcher's pure strategy set is some other collection $\mathcal{S} \subseteq 2^Q$ of subsets.

For a strategy pair $(H,S) \in \mathcal{H} \times \mathcal{S}$, the payoff function $P$ is given as follows.
\[
P(H,S) = C(S \setminus H) + \Pi(H \setminus S).
\]
The Searcher is the minimizer and the Hider is the maximizer. The interpretation is that the Searcher pays a cost for searching locations where she does not find anything and she pays a penalty for not searching locations that do contain something.  

In some cases, we will make the simplifying assumption that the cost and penalty functions are the same, in which case the payoff is simply equal to $C(H \triangle S)$, where $H \triangle S$ is the symmetric difference $(H \setminus S) \cup (S \setminus H)$ of $H$ and $S$.

A mixed strategy is given by a randomized choice of pure strategies. For mixed strategies $h$ and $s$ of the Hider and Searcher, respectively, we denote the expected payoff when these strategies are played by $P(h,s)$.

We seek min-max and max-min strategies for the players. More precisely, a mixed strategy $s^*$ is a min-max strategy for the Searcher if it minimizes $\sup_{H \in \mathcal{H}} P(H,s)$ over mixed strategies $s$. Similarly, a mixed strategy $h^*$ is a max-min strategy for the Hider if it maximizes $\inf_{S \in \mathcal{S}} P(h,S)$ over mixed strategies $h$. We say strategies $h^*$ and $s^*$ are {\em optimal} if 
\[
\sup_{H \in \mathcal{H}} P(H,s^*) = \inf_{S \in \mathcal{S}} P(h^*,S).
\]
In this case, we refer to the expected payoff in the equation above as the {\em value} of the game.

\section{Alignment Games in Continuous Domains}
\label{sec:continuous}

We begin our analysis by examining the alignment game played on continuous domains, starting with the unit circle in Subsection~\ref{sec:circle}, followed by the unit interval in Subsection~\ref{sec:interval}.

\subsection{Circle} \label{sec:circle}

We consider the alignment game played on a unit length circle, which we index by the interval $Q=[0,1]$, where the points $0$ and $1$ are identified. We will first consider the case that the Hider's strategy set $\mathcal{H}$ consists of all arcs of the circle of length $\alpha$, and the Searcher's strategy set and $\mathcal{S}$ consists of all arcs of length $\beta$. Thus, a strategy for the Searcher may be specified by a point $x$ in $[0,1]$, which is interpreted as the arc $[x, x+\beta]$, modulo $1$, and a strategy for the Hider is specified by a point $y$ in $[0,1]$, interpreted as the arc $[y, y+\alpha]$, modulo 1. Later we will allow one or both players to choose the arc lengths. 

For an arc $I$ of length $L$ of the circle, the cost and penalty functions are taken to be $C(I) = c L$ and $ \Pi(I) = \pi L$, where $c$ and $\pi$ are positive constants. In other words, the cost is proportional to the length of the ``wasted'' part of the Searcher's arc and the penalty is proportional to the length of the ``undiscovered'' part of the Hider's arc.

It is clear from the symmetry of the game that each player may choose the starting point of their arc uniformly at random on $[0,1]$. Hence, if the arc lengths are fixed, this uniform strategy is trivially optimal for both players. If the arcs lengths can be chosen by the players, then a strategy for that player is specified by that chosen length.

\begin{lemma}
\label{lemma:exp_payoff}
   If the Hider chooses an arc of length $\alpha$ with a starting point chosen uniformly at random and the Searcher chooses an arc of length $\beta$ with a starting point chosen uniformly at random, then the expected payoff is
\begin{align} \label{eq:circle}
    P(\alpha,\beta)  = \beta (1 - \alpha) c + \alpha (1 - \beta) \pi.
\end{align}
This is the value of the alignment game on a unit circle for the case that the Hider's and Searcher's arc lengths are fixed to be $\alpha$ and $\beta$, respectively.
 
\end{lemma}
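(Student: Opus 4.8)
The plan is to reduce everything to a single quantity, the expected length of the overlap $E[|S\cap H|]$, and then compute that with a Fubini/indicator argument that sidesteps the geometry of how two arcs on a circle overlap.

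First I would write $S\setminus H = S \setminus (S\cap H)$ and $H \setminus S = H \setminus (S\cap H)$, so that, since the arcs have fixed lengths $\beta$ and $\alpha$,
\[
|S\setminus H| = \beta - |S\cap H|, \qquad |H\setminus S| = \alpha - |S\cap H|.
\]
Taking expectations over the two independent uniform starting points and substituting into $P(H,S) = c\,|S\setminus H| + \pi\,|H\setminus S|$ gives
\[
P(\alpha,\beta) = c\bigl(\beta - E[|S\cap H|]\bigr) + \pi\bigl(\alpha - E[|S\cap H|]\bigr),
\]
so the whole lemma rests on showing $E[|S\cap H|] = \alpha\beta$.

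To compute the expected overlap I would avoid a direct case analysis on the relative offset of the two arcs, which on a circle forces one to track wraparound and to split into the cases $\alpha+\beta \le 1$ and $\alpha+\beta>1$. Instead I would express the overlap length as an integral of indicators and apply Fubini:
\[
E[|S\cap H|] = E\!\left[\int_0^1 \mathbf{1}[t\in S]\,\mathbf{1}[t\in H]\,dt\right] = \int_0^1 \Pr(t\in S)\,\Pr(t\in H)\,dt,
\]
where the last equality uses that the Searcher's and Hider's arcs are chosen independently. Because each starting point is uniform on the circle, a fixed point $t$ is covered by a random arc of length $\beta$ with probability exactly $\beta$, and by a random arc of length $\alpha$ with probability $\alpha$, uniformly in $t$; this is precisely where the rotational symmetry of the circle is used, and it is what makes the wraparound irrelevant. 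Hence the integrand is the constant $\alpha\beta$ and $E[|S\cap H|]=\alpha\beta$. Substituting back yields $P(\alpha,\beta)=\beta(1-\alpha)c + \alpha(1-\beta)\pi$, as claimed.

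For the value claim I would invoke the symmetry observation already made before the lemma. Since the payoff depends only on the relative position of the two arcs, playing a uniform starting point guarantees the expected payoff $P(\alpha,\beta)$ against \emph{any} fixed pure strategy of the opponent: if the Hider plays uniform, every Searcher location yields expected payoff $P(\alpha,\beta)$, and symmetrically when the Searcher plays uniform. Thus uniform is simultaneously a min-max strategy for the Searcher and a max-min strategy for the Hider, the two guarantees coincide, and $P(\alpha,\beta)$ is the value. The only step requiring real care is the overlap computation, and the indicator/Fubini device is what keeps it clean: the naive alternative of integrating the piecewise-linear overlap-versus-offset function is exactly where the circular wraparound would otherwise introduce fiddly casework, so I expect that to be the main obstacle one wants to design around rather than confront head-on.
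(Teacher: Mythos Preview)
Your proof is correct and takes essentially the same approach as the paper: the paper introduces an auxiliary uniform random point $X$ and argues that the expected length of a region equals $\Pr(X\text{ lies in it})$, then factors via independence of the two arcs---which is exactly your Fubini/indicator computation, packaged slightly differently. The only cosmetic difference is that you first reduce both set-differences to the single overlap $E[|S\cap H|]=\alpha\beta$ before substituting, whereas the paper computes $E[|S\setminus H|]$ and $E[|H\setminus S|]$ directly; your version is marginally more explicit about the Fubini step and about why uniform play certifies the value, but the underlying idea is identical.
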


\begin{proof}
Let $X$ be a point on the circle, chosen uniformly at random. Let $E_1$ be the event that $X$ lies in the Hider's arc $A$ and let $E_2$ be the event that $X$ lies in the Searcher's arc~$B$. Since the Hider's and Searcher's starting points are chosen uniformly at random, the probability $Pr(E_1)$ of the event $E_1$ is equal to $\alpha$ and $Pr(E_2) = \beta$. As starting points are chosen independently, $E_1$ and $E_2$ are independent.

Since the length of an arc on the circle corresponds to the probability of a randomly chosen point being contained in that arc, we can express the expected lengths of the non-overlapping regions in terms of these probabilities.

The expected cost of $A \setminus B$ is
\begin{align*} 
\mathbb E[C(A \setminus B)] &= c \cdot Pr(X \in B \setminus A) = c \cdot Pr(E_1^c \cap E_2) \\ &= c \cdot Pr(E_1^c)Pr(E_2) = c \beta(1-\alpha). \end{align*}

Similarly, the expected penalty of $B \setminus A$ is $\pi \alpha(1-\beta)$. Hence, the expected payoff is
\begin{align*} P(\alpha, \beta) &= \mathbb E[C(B \setminus A) + \Pi (A \setminus B) ] = c \beta(1-\alpha) + \pi \alpha(1-\beta).
\end{align*}
\end{proof}

We now consider variations of the game where one or both of the players are allowed to choose the length of their arc, and give solutions to those games in each case.

\begin{proposition}
\label{theorem:circle}
For the alignment game played on the unit circle,
\begin{enumerate}[(i)]
\item When both players can choose their arc lengths strategically, it is optimal for the Hider to choose $\alpha^* \equiv \frac{c}{c+\pi}$ and for the Searcher to choose $\beta^* \equiv \frac{\pi}{c+\pi}$, with both selecting starting points uniformly at random. The equilibrium payoff is $ \frac{c\pi}{c+\pi}$.

\item In the case where the Hider has a fixed arc length $\alpha$ and the Searcher can choose her arc length, the Searcher's optimal strategy depends on the size of $\alpha$, relative to $\alpha^*$. If $\alpha < \alpha^*$, it is optimal for the Searcher to choose $\beta = 0$, and the value of the game is $V = \alpha \pi$. If $\alpha \ge \alpha^*$, it is optimal for the Searcher to choose $\beta = 1$, and the value of the game is $(1-\alpha) c$.

\item In the case where the Searcher has a fixed arc length $\beta$ and the Hider can choose his arc length, optimal strategies are as follows. If $\beta < \beta^*$, it is optimal for the Hider to choose $\alpha=1$, and the value of the game is $(1-\beta)\pi$; if $\beta \ge \beta^*$, it is optimal for the Hider to choose $\alpha=0$, and the value of the game is $\beta c$.
\end{enumerate}
\end{proposition}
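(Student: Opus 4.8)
The plan is to exploit the bilinear structure of the expected payoff from Lemma~\ref{lemma:exp_payoff}. Expanding equation~\eqref{eq:circle} gives
\[
P(\alpha,\beta) = \pi\alpha + c\beta - (c+\pi)\alpha\beta,
\]
which is affine in each of $\alpha$ and $\beta$ when the other is held fixed. Before optimizing over lengths, I would first record the reduction that lets me ignore the players' starting points: if one player fixes a uniform starting distribution and an arc length, then by the same point-counting computation as in Lemma~\ref{lemma:exp_payoff} (integrating the indicator of the symmetric difference over that player's uniform start), the expected payoff against any opponent arc---regardless of the opponent's own starting point---equals $P$ evaluated at the two lengths. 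Consequently each part reduces to a one- or two-dimensional optimization of $P(\alpha,\beta)$ over $[0,1]$, and pure lengths suffice.

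For part (i), I would locate the saddle point of $P$ on $[0,1]^2$. Setting $\partial P/\partial\alpha = \pi - (c+\pi)\beta = 0$ and $\partial P/\partial\beta = c - (c+\pi)\alpha = 0$ yields the candidate $\alpha^* = c/(c+\pi)$ and $\beta^* = \pi/(c+\pi)$. The verification is the crux and is clean: substituting $\beta = \beta^*$ annihilates the coefficient of $\alpha$, so $P(\alpha,\beta^*) \equiv c\beta^* = c\pi/(c+\pi)$ is constant in $\alpha$; symmetrically $P(\alpha^*,\beta) \equiv \pi\alpha^* = c\pi/(c+\pi)$ is constant in $\beta$. Thus $\beta^*$ guarantees the Searcher a payoff of at most $c\pi/(c+\pi)$ and $\alpha^*$ guarantees the Hider at least $c\pi/(c+\pi)$; the two bounds coincide, establishing both optimality and the value $c\pi/(c+\pi)$.

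For parts (ii) and (iii), I would use that a fixed opponent length reduces $P$ to an affine function of the free length, so the optimum sits at an endpoint of $[0,1]$ determined by the sign of the relevant coefficient. In (ii), with $\alpha$ fixed, $P(\alpha,\beta) = \pi\alpha + \beta\,(c - (c+\pi)\alpha)$; the minimizing Searcher takes $\beta = 0$ when the bracket is positive (i.e.\ $\alpha < \alpha^*$), giving value $\pi\alpha$, and $\beta = 1$ when it is negative ($\alpha > \alpha^*$), giving value $c(1-\alpha)$, with the threshold exactly at $\alpha^*$ where the two expressions agree. Part (iii) is the mirror image: writing $P(\alpha,\beta) = c\beta + \alpha\,(\pi - (c+\pi)\beta)$, the maximizing Hider takes $\alpha = 1$ for $\beta < \beta^*$ (value $\pi(1-\beta)$) and $\alpha = 0$ for $\beta > \beta^*$ (value $c\beta$), matching at $\beta^*$.

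I do not anticipate a genuine obstacle here, since the payoff is bilinear; the only point requiring care is the justification that restricting to uniform starting points and deterministic lengths loses nothing. This is supplied by the reduction in the first paragraph, together with the observation that an affine---hence simultaneously concave and convex---payoff in the free variable is never improved by mixing over that variable.
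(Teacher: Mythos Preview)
Your proposal is correct and follows essentially the same approach as the paper: both verify the saddle point by substituting $\beta^*$ (resp.\ $\alpha^*$) into the bilinear expression~\eqref{eq:circle} to obtain a constant, and both handle parts~(ii) and~(iii) by reading off the sign of the affine coefficient in the free variable. Your treatment is slightly more thorough in that you explicitly locate the candidate via first-order conditions and spell out the starting-point reduction, whereas the paper simply posits $\alpha^*,\beta^*$ and appeals to symmetry for the uniform start.
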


\begin{proof}
Beginning with part (i), let us substitute the Searcher's proposed strategy  $\beta = \beta^* $ into the expected payoff function~(\ref{eq:circle}).
\[
P(\alpha, \beta^*) = \alpha \left(1 - \frac{\pi}{c+\pi}\right) \pi + \frac{\pi}{c+\pi} \left(1 - \alpha\right) c = \frac{c\pi}{c+\pi}.
\]
Thus, the Searcher can ensure a payoff of at most $c \pi/(c + \pi)$.

It is also easy to show that if the Hider chooses $\alpha^* = \frac{c}{c+\pi}$, the expected payoff becomes $c \pi/(c+\pi)$.
Thus, the Hider can ensure the payoff is at least $c \pi/(c+\pi)$, and this must be the value of the game.

For part (ii), with fixed Hider arc length $\alpha$, we can rearrange the payoff function:
\begin{align*}
P(\alpha, \beta) = \alpha \pi - \alpha\beta\pi + \beta c - \alpha\beta c = \alpha \pi + \beta[c - \alpha(c + \pi)].
\end{align*}

The coefficient of $\beta$, which is $[c - \alpha(c + \pi)]$, determines the Searcher's best response. If $\alpha < \alpha^*$, this coefficient is positive, so the Searcher minimizes payoff with $\beta = 0$. If $\alpha > \alpha^*$, the coefficient is negative, so the best response becomes $\beta^* = 1$. If $\alpha = \alpha^*$, then any strategy of the Searcher is a best response, and in particular the strategy $\beta=1$.

The equilibrium payoffs follow directly from substituting these optimal values into the payoff function. 

Part (iii) is analogous to part (ii), and we omit the proof.
\end{proof}

Part (i) of Proposition~\ref{theorem:circle}, when players can choose the length of their intervals, is the most interesting. The optimal strategies for both players depends on the relative cost to the Searcher of searching in the wrong place, versus not searching in the right place. If the former cost is more significant, then the Searcher prefers a cautious strategy, and chooses a relatively short interval, whereas the Hider uses a more bold strategy and chooses a relatively long interval. If the relative size  of the costs are reversed, so are the optimal strategies.

It is also worth noting that in case (ii), if $\alpha=\alpha^*$, any strategy is optimal for the Searcher, as she is indifferent between all her strategies. Similarly in case (iii), if $\beta=\beta^*$.

\subsection{Unit Interval} \label{sec:interval}

We now consider the alignment game played on the unit interval. As in the previous subsection, the Hider and Searcher choose subintervals, this time of the unit interval $[0,1]$. Again, we may consider different variations of the game, depending on whether the Hider and Searcher are allowed to choose the length of their subinterval. Unlike the circle case, we cannot argue that the players should use uniform strategies, because of the endpoints of the interval. This complicates the analysis, and so we make the simplifying assumption $\pi = c=1$, so that the cost and penalty functions for an interval $I$ of length $L$ are both given by $C(I)=\Pi(I)=L$. The payoff is then the total length of the symmetric difference of the two players' chosen intervals.%Hence, if the Hider chooses an interval $I_1$ of length $\alpha$ and the Searcher chooses and interval $I_2$ of length $\beta$, the payoff is given by $$P(I_1,I_2) = |I_1 \Delta I_2| = |I_1 \cup I_2| - |I_1 \cap I_2| = \alpha + \beta - 2|I_A \cap I_B|.$$ 

%This linear domain results in different equilibrium behaviors compared to the circular case, as demonstrated in the following theorem.

It turns out that the case where the players can choose the length of their subintervals is simplest to solve. 

\begin{proposition}
    For the alignment game played on the unit with equal cost and penalty functions, where both players may choose the length of their subinterval, an optimal strategy for both players is to choose with equal probability the intervals $[0,1/2]$ and $[1/2,1]$. The value of the game is $1/2$.
\end{proposition}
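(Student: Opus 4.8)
The plan is to establish the value by the standard two-sided bounding argument: exhibit a Hider strategy that guarantees an expected payoff of at least $1/2$ against every Searcher pure strategy, and a Searcher strategy that guarantees at most $1/2$ against every Hider pure strategy. Since the prescribed strategy is identical for both players, and since the payoff $P(H,S) = |H \triangle S|$ is symmetric in its two arguments when $c = \pi = 1$, a single computation handles both directions at once. I would also note at the outset that the half-intervals $[0,1/2]$ and $[1/2,1]$ are legitimate pure strategies (subintervals of length $1/2$), which is permitted precisely because both players are free to choose their interval lengths.

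The central step is the following partition identity. Write $L = [0,1/2]$ and $R = [1/2,1]$, so that $L$ and $R$ cover $[0,1]$ and overlap only in the single point $1/2$, which has measure zero. For an arbitrary subinterval $A \subseteq [0,1]$ (the opponent's pure strategy, of arbitrary position and length), I would use $|A \triangle B| = |A| + |B| - 2|A \cap B|$ to compute
\[
|A \triangle L| + |A \triangle R| = 2|A| + \big(|L| + |R|\big) - 2\big(|A \cap L| + |A \cap R|\big) = 2|A| + 1 - 2|A| = 1,
\]
where the last equality uses that $A \cap L$ and $A \cap R$ are disjoint with union $A$, since $A \subseteq L \cup R$.

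Given this identity, the remainder is immediate. If the Hider plays $L$ and $R$ each with probability $1/2$ and the Searcher responds with any pure strategy $A$, the expected payoff is $\tfrac{1}{2}\big(|A \triangle L| + |A \triangle R|\big) = 1/2$; hence $\sup_{H} P(H, s^*)$ and $\inf_{S} P(h^*, S)$ are each exactly $1/2$, not merely bounded by it. By the symmetry of the payoff, the same mixed strategy used by the Searcher yields exactly $1/2$ against every Hider pure strategy. The two quantities coincide, so by the definition of optimality in Section~\ref{sec:intro} the value is $1/2$ and the prescribed strategies are optimal.

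I do not anticipate a genuine obstacle. The only point requiring care is that the bounding computation must hold against an \emph{arbitrary} opposing pure strategy — an interval of arbitrary starting point and length — rather than only against the two-halves strategy. The partition identity discharges this uniformly, because it holds for every $A \subseteq [0,1]$, so the expected payoff is constant at $1/2$ regardless of the opponent's response. This is what makes the argument crisp and avoids any case analysis on the position or length of the opponent's interval.
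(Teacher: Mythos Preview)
Your proof is correct and follows essentially the same approach as the paper. Both arguments reduce to showing that for any interval $A$ the sum $|A \triangle L| + |A \triangle R|$ equals $1$; the paper does this by parameterizing $x = |A \cap L|$, $y = |A \cap R|$ and expanding each term, while you obtain the same identity in one line via $|A \triangle B| = |A| + |B| - 2|A \cap B|$ together with $|A \cap L| + |A \cap R| = |A|$.
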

\begin{proof}
Suppose one player plays the proposed strategy and the other player chooses an interval $I$ such that $|I \cap [0,1/2]|=x$ and $|I \cap[1/2,1]|=y$. Then the expected payoff is 
\[
1/2 \cdot ((1/2-x) + y) + 1/2 \cdot(x + (1/2-y)) = 1/2.
\]
Therefore, each player can ensure an expected payoff of precisely $1/2$, so this strategy is optimal for both players.
\end{proof}

We now turn to the case where one player's interval length is predetermined, and the other player can choose it.

\begin{theorem} \label{thm:interval2}
Consider the alignment game on the unit interval with equal cost and penalty functions. There are two cases where only one of the players can choose their interval length:
\begin{enumerate}[(i)]
\item First suppose the Hider's interval length is fixed at $\alpha \in [0,1]$ and the Searcher can choose her interval length. Then it is optimal for the Hider to choose with equal probability one of the intervals $[0,\alpha]$ and $[1-\alpha,1]$. If $\alpha \ge 1/2$, it is optimal for the Searcher to choose the whole interval $[0,1]$, and the value of the game is $1-\alpha$; if $\alpha \le 1/2$, it is optimal for the Searcher to choose the empty set, and the value of the game is $\alpha$.
\item Now suppose the Searcher's interval length is fixed at $\beta \in [0,1]$ and the Hider can choose his interval length. Then it is optimal for the Searcher to choose with equal probability one of the intervals $[0,\beta]$ and $[1-\beta,1]$. If $\beta \ge 1/2$, it is optimal for the Hider to choose the empty set, and the value of the game is $\beta$; if $\beta \le 1/2$, it is optimal for the Hider to choose the whole interval $[0,1]$, and the value of the game is $1-\beta$.
\end{enumerate}
\end{theorem}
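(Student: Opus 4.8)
The plan is to establish each assertion by the standard two-sided argument: show that the proposed mixed strategy of the fixed-length player guarantees the claimed value $V$ as a bound against \emph{every} pure strategy of the free player, and that the proposed pure strategy of the free player guarantees the same $V$ as the opposite bound; matching the two pins down both the value and the optimality of the strategies. Since the Hider maximizes and the Searcher minimizes, in part~(i) I would show that the Hider's mixture $h^*$ over $H_1 = [0,\alpha]$ and $H_2 = [1-\alpha,1]$ satisfies $\inf_S P(h^*,S) \ge V$, and that the Searcher's single interval ($\emptyset$ or $[0,1]$) satisfies $\sup_H P(H,s^*) \le V$. The free player's bound is immediate, since playing against a fixed set makes the payoff a constant function of the opponent's length-$\alpha$ interval: if the Searcher plays $\emptyset$ then $P(H,\emptyset)=|H|=\alpha$, and if she plays $[0,1]$ then $P(H,[0,1]) = 1 - |H| = 1-\alpha$, so in each regime her pure strategy caps the payoff at exactly $V$ regardless of $H$.

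The substance is the other bound, where the key tool is the elementary identity $|A \triangle B| = |A| + |B| - 2|A \cap B|$ together with inclusion--exclusion. Writing the Hider's expected payoff against an arbitrary Searcher interval $S$ and using $|H_1|=|H_2|=\alpha$, I would reduce it to
\[
\tfrac12|H_1 \triangle S| + \tfrac12 |H_2 \triangle S| = \alpha + |S| - |(H_1\cup H_2)\cap S| - |(H_1 \cap H_2)\cap S|.
\]
Everything then hinges on the relative position of the two supporting intervals, which is governed precisely by the threshold $\alpha = 1/2$.

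This case split is the crux. When $\alpha \le 1/2$ the intervals $H_1$ and $H_2$ are disjoint (up to a measure-zero endpoint), so $|(H_1\cap H_2)\cap S|=0$ and the expression equals $\alpha + |S \setminus (H_1\cup H_2)| \ge \alpha$, with equality at $S=\emptyset$; this gives $V=\alpha$. When $\alpha \ge 1/2$ the intervals overlap and cover the line, $H_1 \cup H_2 = [0,1]$ and $H_1 \cap H_2 = [1-\alpha,\alpha]$ of length $2\alpha-1$, so $|(H_1\cup H_2)\cap S|=|S|$ and the expression collapses to $\alpha - |(H_1\cap H_2)\cap S| \ge \alpha-(2\alpha-1) = 1-\alpha$, with equality whenever $S$ contains the overlap, in particular at $S=[0,1]$; this gives $V=1-\alpha$. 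In both regimes the Hider's guaranteed lower bound matches the Searcher's upper bound, completing part~(i).

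For part~(ii) I would run the mirror-image computation with the roles of the length functions reversed: against an arbitrary Hider interval $H$, the Searcher's mixture over $S_1=[0,\beta]$, $S_2=[1-\beta,1]$ yields $\beta + |H| - |(S_1\cup S_2)\cap H| - |(S_1\cap S_2)\cap H|$, and the same $\beta=1/2$ threshold controls whether $S_1,S_2$ are disjoint or overlapping. Because the Searcher is now the minimizer, the relevant bound is an upper bound: for $\beta \le 1/2$ the expression is $\beta + |H \setminus (S_1\cup S_2)| \le \beta + (1-2\beta) = 1-\beta$ (maximized by $H=[0,1]$), and for $\beta \ge 1/2$ it is $\beta - |(S_1\cap S_2)\cap H| \le \beta$ (maximized by $H=\emptyset$). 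The matching bounds come from the free Hider's pure strategies, since $P([0,1],S)=1-\beta$ and $P(\emptyset,S)=\beta$ for every Searcher interval of length $\beta$. I would either present this in full or simply remark that it is the formal dual of part~(i). The only points requiring care throughout are bookkeeping the measure-zero endpoint overlaps and verifying that the extremal responses $\emptyset$ and $[0,1]$ indeed lie in the free player's strategy space.
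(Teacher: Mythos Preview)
Your argument is correct and somewhat cleaner than the paper's. Both proofs follow the same two-sided scheme (the free player's pure strategy bounds the value trivially; the mixed strategy of the constrained player gives the matching bound), and both split on $\alpha \gtrless 1/2$. The difference is in how the hard direction is handled. The paper restricts attention to the Searcher's best responses first---for $\alpha\ge 1/2$ arguing that any best response must contain the overlap $[1-\alpha,\alpha]$ and then parameterizing by the lengths $x,y$ in the two flanking segments; for $\alpha\le 1/2$ enumerating three geometric cases for where the Searcher's interval can sit---and computes the expected payoff in each case. You instead derive a single closed-form expression for $\tfrac12|H_1\triangle S|+\tfrac12|H_2\triangle S|$ valid for \emph{every} $S$ via the identity $|A\triangle B|=|A|+|B|-2|A\cap B|$ and inclusion--exclusion, from which the bound and the equality cases can be read off directly. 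Your route avoids the need to characterize best responses up front and works verbatim for any measurable $S$, not just intervals; the paper's route, in exchange, makes the geometric structure of the optimal responses more visible. Part~(ii) is handled identically in spirit by both (the paper in fact leaves it as an exercise), and your dual computation there is correct.
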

\begin{proof}
For case (i), first suppose $\alpha \ge 1/2$. We show that the proposed Hider strategy ensures a payoff of at least $1-\alpha$. Consider a best response $I$ of the Searcher against this strategy. Clearly $I$ should contain $[1-\alpha,\alpha]$, since the Hider's interval contains this set with probability 1. Let $x$ be the length of $I \cap [0,1-\alpha]$ and let $y$ be the length of $I \cap [\alpha,1]$.  Then the expected payoff against the Hider's proposed strategy is 
\[
\frac{1}{2}(x+(1-\alpha-y)) + \frac{1}{2}(y + (1-\alpha-x)) = 1-\alpha.
\]
Hence, the value is at least $1-\alpha$. 

Also, it is clear that against the Searcher's proposed strategy of choosing the whole interval $[0,1]$, any Hider strategy results in a payoff of $1-\alpha$. Hence, the value is at most $1-\alpha$, and we have equality.

Next, suppose $\alpha \le 1/2$. In this case, a best response must be of the form $[x,y]$, where either $y \le \alpha$, or $x \le \alpha$ and $y \ge 1-\alpha$, or $x \ge 1-\alpha$. By symmetry, we need only consider the first two cases. In the first case, the expected payoff is $(1/2)(\alpha-(y-x))+(1/2)(\alpha+(y-x))=\alpha$. In the second case, the expected payoff is $(1/2)(y-(\alpha-x))+(1/2)(1-x-(y-(1-\alpha)))=1-\alpha \ge \alpha$. Hence, the Hider can ensure a payoff of at at least $\alpha$, so the value is at least $\alpha$.

The Searcher's proposed strategy also clearly ensures a payoff of precisely $\alpha$ against any Hider strategy, so the value is equal to $\alpha$.

The proof of part (ii) of the theorem is very similar to part (i), and we leave it as an exercise to the reader.
\end{proof}

The final remaining variant of the game is when both players' intervals are fixed. This is the most complex variant, and we present the solution in Theorem~\ref{thm:interval} for the case $\alpha=\beta$. To simplify the notation, we identify a strategy for either of the players with the leftmost point of their chosen interval, so that a point $a \in [0,1-\alpha]$ corresponds to the interval $[a,a+\alpha]$. If the Searcher chooses a point $a \in [0,1-\alpha]$ and the Hider chooses a point $b \in [0,1-\alpha]$, the payoff, which we denote $P(a,b)$, is given by the length of the symmetric difference $[a,a+\alpha] \triangle [b,b+\alpha]$, which is
\[
P(a,b) = \begin{cases}
2|a - b| & \text{if } |a - b| \leq \alpha \\
2\alpha & \text{if } |a - b| > \alpha.
\end{cases}
\]

\begin{theorem}
\label{thm:interval}
Consider the alignment game with equal cost and penalty functions, where the Hider's and Searcher's interval lengths are fixed at $\alpha \in [0,1]$. The optimal strategies and value of the game are as follows.

Define $M = \lfloor \frac{1}{\alpha} \rfloor$. It is optimal for the Hider to choose with equal probability each of the $M+1$ leftmost points $$a_i=\left(\frac{1-\alpha}{M}\right)i,~i=0,1,\ldots,M.$$
It is optimal for the Searcher to choose with equal probability each of the $M$ leftmost points $$b_i=\left(\frac{1+\alpha}{M+1}\right)i-\alpha,~i=1,\ldots,M.$$

The value $v$ of the game is given by
\[
v \equiv \frac{2\alpha(M^2-M-1)+2}{M(M+1)}.
\]
\end{theorem}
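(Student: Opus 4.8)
The plan is to establish the claimed value $v$ by the standard two-sided argument for zero-sum games: exhibit the two proposed mixed strategies and show that the Hider's strategy guarantees an expected payoff of at least $v$ against every Searcher pure strategy, while the Searcher's strategy guarantees at most $v$ against every Hider pure strategy. Since both proposed strategies are supported on finitely many points, in each direction the relevant quantity is an explicit finite average of the payoff kernel, and the opponent's best response ranges over the compact interval $[0,1-\alpha]$. First I would rewrite the payoff in the compact form $P(a,b)=2\min(|a-b|,\alpha)$ and note that each map $x\mapsto\min(|c-x|,\alpha)$ is a \emph{capped tent}: piecewise linear with slopes $\pm 1$, vanishing at $x=c$ and flat (equal to $\alpha$) outside $[c-\alpha,c+\alpha]$. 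Consequently every expected-payoff function I must analyze is continuous and piecewise linear, so its extrema occur at finitely many breakpoints, namely the atoms of the fixed strategy together with the points lying at distance exactly $\alpha$ from them.

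Before the main estimates I would record the arithmetic consequences of $M=\lfloor 1/\alpha\rfloor$, namely the sandwich $\frac{1}{M+1}<\alpha\le\frac1M$. These inequalities pin down how the two grids sit relative to the cap: the Hider's spacing $\frac{1-\alpha}{M}$ and the Searcher's spacing $\frac{1+\alpha}{M+1}$ must be compared with $\alpha$ in order to decide, for each candidate response point, exactly which atoms lie within distance $\alpha$ (contributing a genuine $|a-b|$ term) and which are capped (contributing $\alpha$). I would also check that the grids are well defined, i.e. $b_1\ge 0$ and $b_M\le 1-\alpha$; both reduce to the single inequality $\alpha M\le 1$.

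For the Searcher's bound I would fix the uniform strategy on the $b_i$ and study $g(y)=\frac1M\sum_{i=1}^M P(b_i,y)$ as a function of the Hider's response $y\in[0,1-\alpha]$, the goal being $\max_y g(y)=v$. I would locate the breakpoints of $g$, argue that on each linear piece the maximum sits at an endpoint, and then evaluate $g$ at the Hider's grid points $a_i$ and at $0$ and $1-\alpha$, showing that these all give $v$ while the interior dips stay below. The symmetric computation for the Hider treats $f(x)=\frac{1}{M+1}\sum_{i=0}^M P(a_i,x)$ and aims to show $\min_x f(x)=v$. In both directions the final step is the algebraic identity that the common extreme value equals $\frac{2\alpha(M^2-M-1)+2}{M(M+1)}$, which I would verify by summing the capped-tent contributions in closed form: once the capped and uncapped indices are separated, the uncapped distances to an equally spaced grid form an arithmetic progression whose sum is quadratic in $M$, matching the numerator.

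The main obstacle I anticipate is precisely the bookkeeping of which atoms are capped at each candidate response point, together with the boundary effects near $0$ and $1-\alpha$ that distinguish this game from its counterpart on the circle (where uniformity made the analysis trivial). The genuinely delicate point is confirming that the opponent's \emph{optimal} pure response against each proposed mixture really yields $v$ and not something strictly smaller (for the Searcher's guarantee) or strictly larger (for the Hider's guarantee). A response that lands exactly on an atom of the fixed strategy contributes a zero term and is the most dangerous case to control, so I would pay particular attention to those response points. Finally, handling the floor function uniformly in $\alpha$, so that one argument covers all $\alpha$ sharing a given $M$ (including the boundary values $\alpha=1/M$ and $\alpha=1/(M+1)$), is where I expect to spend the most care.
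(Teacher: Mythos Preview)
Your plan is correct and follows the same two-sided minimax structure as the paper, but the paper's execution is more streamlined than the breakpoint enumeration you propose. Instead of cataloguing all kinks of the piecewise-linear expected payoff, the paper argues geometrically. For the Hider's guarantee: since the Hider's spacing $(1-\alpha)/M<\alpha$, consecutive Hider intervals overlap and together cover $[0,1]$, so any Searcher interval meets at most two of them, and a best response must contain the full overlap of two adjacent Hider intervals; but all such responses are equivalent (the overlaps have equal length), so it suffices to evaluate a single one, namely $b=0$. Symmetrically, for the Searcher's guarantee: since the Searcher's spacing $(1+\alpha)/(M+1)\ge\alpha$, the Searcher intervals are disjoint with equal-length gaps, a Hider best response must contain exactly one full gap, and again all such responses are equivalent, so evaluating $a=0$ suffices. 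This structural observation collapses your case analysis to one computation in each direction and sidesteps the boundary bookkeeping you flagged as the main obstacle; your capped-tent analysis would reach the same value, just with more cases to track.
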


Before giving a proof of the theorem, we note that the theorem implies that the Hider will mix as few  intervals as possible that cover the whole interval $[0,1]$. The Searcher will mix among as many intervals as possible without overlapping. 

We depict the players' optimal strategies in~Figure~\ref{fig:game1} for $\alpha=0.4$ (so that $M=2$) and in~Figure~\ref{fig:game2} $\alpha=0.3$ (so that $M=3$). In each case, $p$ is the probability each of the pure strategies depicted is chosen by the player.
  
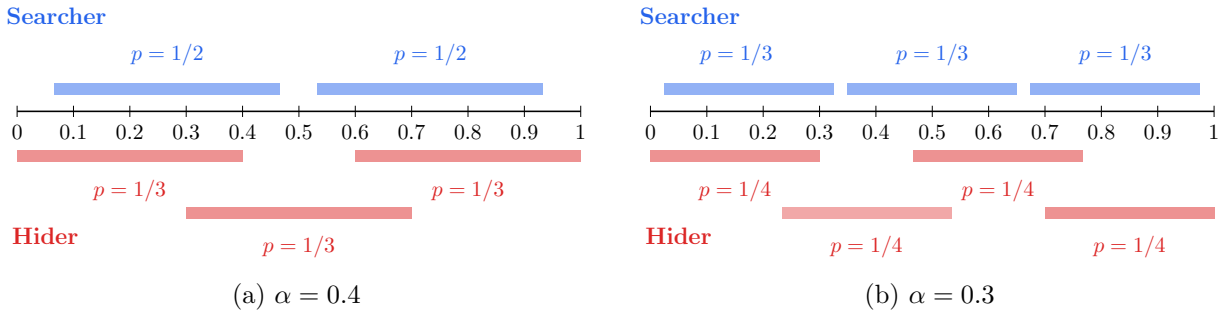
\begin{figure}[htbp]
    \centering
    \begin{subfigure}[b]{0.49\textwidth}
        \centering
        \resizebox{\textwidth}{!}{
            \begin{tikzpicture}
            % Draw the [0, 1] interval
            \draw[thick] (0,0) -- (10,0);
            \foreach \x/\label in {0/0, 1/0.1, 2/0.2, 3/0.3, 4/0.4, 5/0.5, 6/0.6, 7/0.7, 8/0.8, 9/0.9, 10/1}
                \draw (\x,0.1) -- (\x,-0.1) node[below] {\label};
            % Draw Hider's intervals (showing what intervals would be selected)
            \fill[red, opacity=0.5] (0,-0.9) rectangle (4,-0.7);
            \node[red] at (2,-1.4) {$p=1/3$};
            \fill[red, opacity=0.5] (3,-1.9) rectangle (7,-1.7);
            \node[red] at (5,-2.4) {$p=1/3$};
            \fill[red, opacity=0.5] (6,-0.9) rectangle (10,-0.7);
            \node[red] at (8,-1.4) {$p=1/3$};
            \node[red] at (0.5,-2.2) {\large{\textbf{Hider}}};
            % Draw Searcher's interval (showing what interval would be selected)
            \fill[blue, opacity=0.5] (10/15,0.5) rectangle (10/15+4,0.3);
            \node[blue] at (10/15+2,1.) {$p=1/2$};
            \fill[blue, opacity=0.5] (80/15,0.5) rectangle (80/15+4,0.3);
            \node[blue] at (80/15+2,1.) {$p=1/2$};
            \node[blue] at (0.7,1.7) {\large{\textbf{Searcher}}};
            \end{tikzpicture}
        }
        \caption{$\alpha = 0.4$}
        \label{fig:game1}
    \end{subfigure}
    \hfill
    \begin{subfigure}[b]{0.49\textwidth}
        \centering
        \resizebox{\textwidth}{!}{
            \begin{tikzpicture}
            % Draw the [0, 1] interval
            \draw[thick] (0,0) -- (10,0);
            \foreach \x/\label in {0/0, 1/0.1, 2/0.2, 3/0.3, 4/0.4, 5/0.5, 6/0.6, 7/0.7, 8/0.8, 9/0.9, 10/1}
                \draw (\x,0.1) -- (\x,-0.1) node[below] {\label};
            % Draw Hider's intervals (showing what intervals would be selected)
            \fill[red, opacity=0.5] (0,-0.9) rectangle (3,-0.7);
            \node[red] at (1.5,-1.4) {$p=1/4$};
            \fill[red, opacity=0.4] (70/30,-1.9) rectangle (70/30+3,-1.7);
            \node[red] at (70/30+1.5,-2.4) {$p=1/4$};
            \fill[red, opacity=0.5] (140/30,-0.9) rectangle (140/30+3,-0.7);
            \node[red] at (140/30+1.5,-1.4) {$p=1/4$};
            \fill[red, opacity=0.5] (210/30,-1.9) rectangle (210/30+3,-1.7);
            \node[red] at (8.5,-2.4) {$p=1/4$};
            \node[red] at (0.5,-2.2) {\large{\textbf{Hider}}};
            % Draw Searcher's interval (showing what interval would be selected)
            \fill[blue, opacity=0.5] (10/40,0.5) rectangle (10/40+3,0.3);
            \node[blue] at (1/40+1.5,1.) {$p=1/3$};
            \fill[blue, opacity=0.5] (140/40,0.5) rectangle (140/40+3,0.3);
            \node[blue] at (140/40+1.5,1.) {$p=1/3$};
            \fill[blue, opacity=0.5] (270/40,0.5) rectangle (270/40+3,0.3);
            \node[blue] at (270/40+1.5,1.) {$p=1/3$};
            \node[blue] at (0.7,1.7) {\large{\textbf{Searcher}}};
            \end{tikzpicture}
        }
        \caption{$\alpha = 0.3$}
        \label{fig:game2}
    \end{subfigure}
    \caption{Optimal strategies on the unit interval with fixed interval lengths.}
    \label{fig:games}
\end{figure}

\begin{proof}[Proof of Theorem~\ref{thm:interval}]
We first show that the Hider can ensure the payoff is at least $v$. It is clear that any Searcher strategy must have non-empty intersection with at most two of the intervals in the Hider's support. Furthermore, a best response of the Searcher must contain the whole of the intersection of two adjacent intervals in the Hider's support. In fact, it is easy to see that {\em any} Searcher strategy that contains the whole of the intersection of two adjacent intervals in the Hider's support has the same expected payoff against the Hider's strategy, since the intersections all have the same length. 

Let us take one particular such Searcher strategy: the interval with point $b=0$. The payoff of this strategy is $0$ against the interval with point $a_0$, and is $2\alpha$ against all the intervals with points $a_2,\ldots,a_M$. Against the interval with point $a_1$, the payoff is $2a_1$. Therefore, the expected payoff against the Hider's proposed strategy is
\[
\left(\frac{M-1}{M+1}\right)2\alpha + \left(\frac{1}{M+1}\right)\frac{2(1-\alpha)}{M} = v.
\]
It follows that the Hider can ensure a payoff of at least $v$.

Now consider the Searcher's proposed strategy. A best response of the Hider maximizes the length of the intersection of his interval with the gaps between the intervals in the Searcher's support. Furthermore, the Searcher's interval cannot intersect with with more than one of these gaps, so a best response must contain the whole of precisely one gap. Any such strategy has the same expected payoff against the Searcher's strategy, since the gaps all have the same length. 

We take one particular choice of Hider strategy: the interval with point $a=0$. The payoff of this strategy is $2\alpha$ against all the intervals with points $b_2,\ldots,b_M$, and is $2b_1$ against the interval with point $b_1$. Therefore, the expected payoff against the Searcher's proposed strategy is
\[
\frac{M-1}{M}2\alpha + \frac{1}{M} \cdot 2\left(\frac{1+\alpha}{M+1}-\alpha \right) = v.
\]
\end{proof}

It is worth noting that as $\alpha \rightarrow 0$, the importance of the points of the unit interval decreases, and the value of the game converges to the value of the game in the case of the circle. Indeed, let us rewrite the value $v$ of the game as follows.
\[
v=\frac{ M}{M+1}2\alpha -\frac{1}{\alpha(M+1)}2\alpha^2 + \frac{2(1-\alpha)}{M(M+1)}.
\]
As $\alpha \rightarrow 0$, the coefficients $M/(M+1)$ and $-1/(\alpha(M+1))$  converge to $1$ and $-1$, respectively, and $2(1-\alpha)/(M(M+1))$ converges to $0$. So for small $\alpha$, the value $v$ approaches $2\alpha-2\alpha^2=2\alpha(1-\alpha)$, which is the value of the analogous game on the circle, as shown in Proposition~\ref{lemma:exp_payoff} (for the case $\alpha=\beta$).

\section{Discrete Alignment Games}
\label{sec:discrete}

In this section and the next, we consider variations of the game where the search space $Q$ is finite and equal to $[n] \equiv \{1,2,\ldots,n\}$, for some positive integer $n$. In this case, the cost and penalty functions are given by vectors $(c_1,\ldots,c_n)$ and $(\pi_1,\ldots,\pi_n)$ of positive reals. In particular, for $A \subseteq Q$, we take $C(A)=c(A) \equiv \sum_{j \in A} c_j$ and $\Pi(A)=\pi(A) \equiv \sum_{j \in A} \pi_j$. Thus, for fixed strategies $H$ and $S$ of the Hider and Searcher, respectively, the payoff $P(H,S)$ is given by $P(H,S)=c(S \setminus H) + \pi(H \setminus S)$.

We will consider two different variants of the game, depending on the Hider's and Searcher's feasible subsets. In Subsection~\ref{sec:any} we consider the variant of the game where the players may choose any subset of $[n]$. In Subsection~\ref{sec:fixed}, we consider the variant where the cardinality $k$ of the Hider's set is fixed, whereas the Searcher can choose any subset of $[n]$. In this case, we will see that even for $n=2$ and $k=1$, the solution is somewhat complex.

\subsection{Non-equal Costs and Penalties} \label{sec:any}

In this section, we assume that the collections $\mathcal{H}$ and $\mathcal{S}$ of the Hider's and Searcher's feasible subsets of $[n]$ are both equal to the power set $2^{[n]}$ of {\em all} subsets of $[n]$. In this case, it turns out the game has a particular elegant solution.

For a subset $A \in 2^{[n]}$, let $f(A) \equiv \sum_{B\subseteq A} \prod_{j\in B} c_j/\pi_j$.

\begin{theorem}
\label{thm:no_k}
Consider the discrete alignment game with $\mathcal{H}=\mathcal{S}=2^{[n]}$. An optimal Hider strategy is to choose a subset $H \subseteq [n]$ with probability $p_H$, given by $$ p_H := \lambda \prod_{j \in H} \frac{c_j}{\pi_j},$$

\noindent where $\lambda$ is a normalizing constant factor given by  $ \lambda = f([n])^{-1}$.

An optimal strategy for the Searcher is to choose $S \subseteq [n]$ with probability $p_{H^c}$.

The value of the game is
$$
V=\sum_{j \in [n]}\frac{f([n] \setminus \{j\})}{f([n])}\cdot c_j,
$$

\end{theorem}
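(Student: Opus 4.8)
The plan is to establish optimality by the standard two-sided bounding argument: show the proposed Hider strategy guarantees an expected payoff of at least $V$ against every pure Searcher strategy, and symmetrically that the proposed Searcher strategy guarantees at most $V$ against every pure Hider strategy. Since the two bounds will turn out to coincide, $V$ is the value and both strategies are optimal; no appeal to the minimax theorem is needed beyond this direct comparison.

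The crucial structural observation is that the Hider's strategy is a product distribution. Writing $r_j = c_j/\pi_j$, the sum over subsets factorizes as $f([n]) = \sum_{B \subseteq [n]} \prod_{j \in B} r_j = \prod_{j=1}^n (1 + r_j)$, so $p_H = \lambda \prod_{j \in H} r_j$ is precisely the distribution under which each element $j$ is included in $H$ independently with probability $r_j/(1+r_j) = c_j/(c_j + \pi_j)$. First I would verify this by computing $\Pr(j \in H) = \lambda\, r_j\, f([n]\setminus\{j\})$ and applying the recursion $f([n]) = (1+r_j)\, f([n]\setminus\{j\})$.

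With independence in hand, the expected payoff against a fixed pure Searcher strategy $S$ decomposes element by element:
\[
\mathbb{E}_H[P(H,S)] = \sum_{j \in S} c_j \Pr(j \notin H) + \sum_{j \notin S} \pi_j \Pr(j \in H).
\]
The key cancellation is that $c_j \Pr(j \notin H) = c_j \cdot \frac{\pi_j}{c_j+\pi_j} = \frac{c_j \pi_j}{c_j + \pi_j}$ and $\pi_j \Pr(j \in H) = \pi_j \cdot \frac{c_j}{c_j+\pi_j} = \frac{c_j \pi_j}{c_j + \pi_j}$ are equal, so every coordinate contributes the same amount regardless of whether $j \in S$. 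Hence $\mathbb{E}_H[P(H,S)] = \sum_j \frac{c_j \pi_j}{c_j + \pi_j}$ independently of $S$, and I would identify this with the stated value via $f([n]\setminus\{j\})/f([n]) = 1/(1+r_j) = \pi_j/(c_j+\pi_j)$, giving $\frac{f([n]\setminus\{j\})}{f([n])}\, c_j = \frac{c_j \pi_j}{c_j+\pi_j}$.

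The Searcher's bound is the mirror image. Choosing $S$ with probability $p_{S^c}$ makes the Searcher's inclusion events independent with $\Pr(j \in S) = \Pr(j \notin H) = \pi_j/(c_j+\pi_j)$ under the Hider's product law, so the analogous element-wise decomposition of $\mathbb{E}_S[P(H,S)]$ again collapses to $\sum_j \frac{c_j \pi_j}{c_j+\pi_j} = V$ for every pure $H$. The two matching bounds then close the argument. The only genuine work is spotting the product/independence structure; once that and the per-coordinate cancellation are in place, the rest is mechanical, so there is no substantial obstacle beyond recognizing the factorization.
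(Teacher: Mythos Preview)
Your proposal is correct and follows essentially the same route as the paper: compute the marginal probability that each element $j$ lies in the Hider's set, decompose the expected payoff coordinate-wise, and observe that the $j$th contribution equals $\frac{f([n]\setminus\{j\})}{f([n])}\,c_j$ regardless of whether $j \in S$, so the payoff is constant in $S$; the Searcher's side is symmetric. Your explicit identification of the product factorization $f([n]) = \prod_j(1+r_j)$ and the resulting independence is a nice conceptual gloss that the paper leaves implicit, but the underlying argument is the same.
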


\begin{proof}
Suppose the Hider plays the strategy described. Then, the probability $q_j$ that some location $j\in[n]$ is contained in the Hider's subset is
$$
q_j=\sum_{\{H\subseteq[n]:j\in A\}}p_H=\frac{c_j}{\pi_j}\cdot\frac{f([n] \setminus \{j\})}{f([n])}.
$$
The complementary probability $1-q_j$ can be written as
$$
1-q_j=\sum_{\{H\subseteq[n]:j\notin H\}}p_H=\frac{f([n] \setminus \{j\})}{f([n])}.
$$
Now let $S$ be an arbitrary Searcher strategy. The payoff is incremented by $(1-q_j)c_j$ for every $j \in S$, and by $q_j\pi_j$ for every $j \notin S$. Hence, denoting the Hider's mixed strategy by $p$, the expected payoff against $S$ is
\begin{align*}
P(p,S)&=\sum_{j\in S}(1-q_j)c_j+\sum_{\{j\}\notin S}q_j\pi_j \\
&=\sum_{j\in S}\frac{f([n] \setminus \{j\})}{f([n])}\cdot c_j+\sum_{\{j\}\notin S}\frac{c_j}{\pi_j}\cdot\frac{f([n] \setminus \{j\})}{f([n])}\cdot \pi_j\\
&=\sum_{j\in [n]}\frac{f([n] \setminus\{j\})}{f([n])}\cdot c_j 
\end{align*}

Hence, the Hider can ensure the payoff is at least $V$. By an almost identical argument, the Searcher can ensure the payoff is at most $V$, so the value of the game is $V$.\end{proof}

Note that, for the situation where $c_i=\pi_i$ for all $i$, we have $f(H)=2^{|H|}$ for all $H \subseteq [n]$, so $p_H=1/2^n$, and it is optimal for both players is to choose all subsets  with equal probability. In this case, the value of the game is $\sum_{i=1}^n c_j/2$.

\subsection{Fixed Number of Hiding Locations} \label{sec:fixed}

We now consider a slight variation of the game above, where the Hider's feasible subsets $\mathcal{H}$ are all the subsets of $[n]$ of cardinality $k$, and the Searcher's feasible subsets are, again $\mathcal{S}=2^{[n]}$. The Hider now has $\binom{n}{k}$ pure strategies, whereas the Searcher has $2^n$. 

We derive a partial solution for a specific instance where $n=2$ and $k=1$. In this case, the Hider must place a fault in location 1 or 2; therefore, his possible strategy sets are $\{1\},\{2\}$. The Searcher, despite knowing $k=1$, still chooses among all possible subsets of $[2]$: $\emptyset,\{1\},\{2\},\{1,2\}$. 

\begin{proposition}

The solution of the discrete alignment game with $\mathcal{H}=\{\{1\},\{2\}\}$ and $\mathcal{S}=2^{\{1,2\}}$ splits into four scenarios. Assume, without loss of generality, that $c_1\ge c_2$.Then 

\begin{enumerate}[(i)]
    \item if $\pi_1\pi_2\le c_1c_2$ and $\pi_1\le\pi_2$, it is optimal for the Searcher to mix between $\emptyset$ and $\{2\}$;
        \item if $\pi_1\pi_2\ge c_1c_2$ and $\pi_1\le\pi_2$, it is optimal for the Searcher to mix between $\{2\}$ and $\{1,2\}$;
    \item if $\pi_1\pi_2\le c_1c_2$ and $\pi_1\ge\pi_2$, it is optimal for the Searcher to mix between $\emptyset$ and $\{1\}$;
     \item if $\pi_1\pi_2\ge c_1c_2$ and $\pi_1\ge\pi_2$, it is optimal for the Searcher to mix between $\{1\}$ and $\{1,2\}$.
\end{enumerate}
\end{proposition}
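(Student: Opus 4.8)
The plan is to reduce the game to a $2\times 4$ matrix game and solve it with the standard theory of two–row zero-sum games. First I would tabulate $P(H,S)=c(S\setminus H)+\pi(H\setminus S)$ for the two Hider pure strategies $\{1\},\{2\}$ against the four Searcher pure strategies $\emptyset,\{1\},\{2\},\{1,2\}$, obtaining the four Searcher column vectors $\bigl(P(\{1\},S),P(\{2\},S)\bigr)$ equal to $(\pi_1,\pi_2)$, $(0,\,c_1+\pi_2)$, $(c_2+\pi_1,\,0)$ and $(c_2,c_1)$. Since the Hider has only two pure strategies, the Searcher has an optimal strategy supported on at most two columns, and the value equals $\min_s \max(u_1(s),u_2(s))$ as $(u_1,u_2)$ ranges over the convex hull of these four points. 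Geometrically this is the lowest height at which the lower-left boundary of the hull meets the diagonal $u_1=u_2$, so identifying the optimal support amounts to finding which edge of that boundary crosses the diagonal.

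For each case I would take the claimed pair of Searcher columns, solve the $2\times 2$ subgame for the equalizing Searcher mixture $q$ and value $v$, and dually compute the Hider's equalizing mixture $p$ over $\{1\},\{2\}$. For example, in case (i) mixing $\emptyset$ and $\{2\}$ yields weight $q=(\pi_1+c_2)/(\pi_2+c_2)$ on $\emptyset$, value $v=\pi_2(\pi_1+c_2)/(\pi_2+c_2)$, and Hider weight $p=\pi_2/(c_2+\pi_2)$ on $\{1\}$. To certify optimality I would verify that the two unused columns give expected payoff at least $v$ against the Hider's mixture; this is where the hypotheses enter, since each such inequality collapses to $c_1c_2\ge \pi_1\pi_2$ in the low-penalty regime and to $c_1c_2\le \pi_1\pi_2$ in the high-penalty regime. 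In the low-penalty cases the feasibility requirement $q\in[0,1]$ is what produces the comparison between $\pi_1$ and $\pi_2$ (e.g. $q\le 1$ iff $\pi_1\le\pi_2$ in case (i)). When both one-sided bounds coincide, the value and the claimed support follow.

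The step I expect to be the main obstacle is certifying that the claimed support is \emph{globally} optimal rather than merely equalizing, i.e. that no other column pair lets the Searcher do strictly better; I would settle this by locating the diagonal crossing of the hull's lower-left boundary in each regime. Carrying this out, case (iv) deserves particular scrutiny: solving for the equalizing mixture of $\{1\}$ and $\{1,2\}$ gives weight $(c_2-c_1)/(\pi_2+c_2)$ on $\{1\}$, which is nonnegative only when $c_2\ge c_1$ and hence degenerates to the pure strategy $\{1,2\}$ under the standing assumption $c_1\ge c_2$. Testing the representative instance $c_1=2,\,c_2=1,\,\pi_1=3,\,\pi_2=1$ (which satisfies the case (iv) hypotheses $\pi_1\pi_2\ge c_1c_2$ and $\pi_1\ge\pi_2$), the Searcher attains value $1.6$ by mixing $\{2\}$ and $\{1,2\}$, strictly beating the best value $2$ obtainable from any mixture of $\{1\}$ and $\{1,2\}$. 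I therefore anticipate that in the high-penalty regime the genuinely optimal support is $\{2\},\{1,2\}$ (the Searcher hedges on the higher-cost location) regardless of whether $\pi_1\le\pi_2$ or $\pi_1\ge\pi_2$; reconciling the stated support of case (iv) with the optimality certificate—and re-deriving it as $\{2\},\{1,2\}$ if needed—is the crux of the argument.
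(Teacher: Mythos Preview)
Your approach is essentially the paper's: for each case, solve the $2\times 2$ subgame on the claimed support, compute the Hider's equalizing mixture, and check that the two unused Searcher columns yield expected payoff at least the value against that Hider mixture. The paper carries this out explicitly for cases (i) and (ii), obtaining exactly the mixtures and value you describe, and then disposes of cases (iii) and (iv) with the single sentence ``For the other two cases, we can simply rearrange the rows and the proof follows as before.''

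Your suspicion about case (iv) is well founded, and the paper's one-line symmetry argument does not survive scrutiny there. Swapping the row labels exchanges both the $c_i$ and the $\pi_i$, so under the standing assumption $c_1\ge c_2$ the swap lands in the regime $c_1\le c_2$, which is not what case (iv) covers. Your counterexample $(c_1,c_2,\pi_1,\pi_2)=(2,1,3,1)$ is valid: the equalizing weight on $\{1\}$ in the support $\{\{1\},\{1,2\}\}$ equals $(c_2-c_1)/(c_2+\pi_2)<0$, so the best the Searcher can achieve on that support is the pure strategy $\{1,2\}$ with value $c_1=2$, whereas mixing $\{2\}$ and $\{1,2\}$ achieves $8/5$. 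In fact the paper's own argument for case (ii) never uses the hypothesis $\pi_1\le\pi_2$ (feasibility of the Searcher's mixture there follows from $c_1\ge c_2$ alone, and the deviation check reduces to $\pi_1\pi_2\ge c_1c_2$), so the support $\{\{2\},\{1,2\}\}$ is optimal throughout the high-penalty regime $\pi_1\pi_2\ge c_1c_2$ and the split between cases (ii) and (iv) is spurious. It is the statement of the proposition, not your plan, that needs correcting.
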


\begin{proof}
    Let us analyze each case separately.
\begin{enumerate}[(i)]
\item First, consider the case where $\pi_1\pi_2\le c_1c_2$ and $\pi_1\le\pi_2$.
In this reduced game, the Hider's optimal strategy is to play $\{1\}$ with probability $q = \pi_2/(c_2 + \pi_2)$ and $\{2\}$ with probability $1-q$. Correspondingly, the Searcher's optimal strategy is to play $\emptyset$ with probability $p = (c_2 + \pi_1)/(c_2 + \pi_2)$ and  $\{2\}$ with probability $1-p$. The value of this reduced game is $ (c_2 + \pi_1)\pi_2/(c_2 + \pi_2)$. If the Searcher deviates to play either $\{1,2\}$ or $\{1\}$ against the Hider's optimal strategy, the resulting payoff is $ (c_2\pi_2 + c_1c_2)/(c_2 + \pi_2) \geq  (c_2 + \pi_1)\pi_2/(c_2 + \pi_2)$.  Therefore, the Searcher has no incentive to deviate from mixing between $\emptyset$ and $\{2\}$.

\item Now, consider the case where $\pi_1\pi_2\ge c_1c_2$ and $\pi_1\le\pi_2$. We consider the reduced game where the Searcher mixes between $\{2\}$ and $\{1,2\}$. To make the Searcher indifferent between these two choices, the Hider must play $\{1\}$ with probability $q = c_1/(\pi_1 + c_1)$ and $\{2\}$ with probability $1-q$. The Searcher's optimal mixed strategy is to play $\{2\}$ with probability $p = (c_1 - c_2)/(\pi_1 + c_1)$ and $\{1,2\}$ with probability $1-p$. The value of this game is $V = (c_2 + \pi_1)c_1/(\pi_1 + c_1)$.  If the Searcher deviates to play either $\emptyset$ or $\{1\}$ against the Hider's optimal strategy, the resulting payoff is $(\pi_1\pi_2 + c_1\pi_1)/(\pi_1 + c_1) \geq V$ (under the condition $c_1c_2 \leq \pi_1\pi_2$). Thus, the Searcher has no incentive to deviate from mixing between $\{2\}$ and $\{1,2\}$.  

\item[(iii) \& (iv)] For the other two cases, we can simply rearrange the rows and the proof follows as before. This completes the proof.
\end{enumerate}
\end{proof}

Finding a general, closed form solution for the game in the case that one player must choose a subset of fixed cardinality and the other player can choose any subset seems intractable. For this reason, we will make the simplifying assumption of equal costs and penalties for the remainder of the paper.

\section{The Discrete Alignment Game with Equal Costs and Penalties} \label{sec:equal}

In this section we consider the discrete alignment game in the particular case that $c_i=\pi_i$ for all $i \in [n]$, so that the payoff becomes the weighted symmetric difference $P(H,S) =  c(S \Delta H) \equiv c((S \setminus H) \cup (H \setminus S))$. Note that $P(H,S)=P(S,H)$ as $S \Delta H = H \Delta S$. For arbitrary Hider and Searcher strategy sets $\mathcal{H},\mathcal{S} \subseteq 2^{[n]}$, we denote the game $\Gamma(\mathcal{H},\mathcal{S})$.

A mixed Hider strategy is given by a probability distribution $h: \mathcal{H} \rightarrow [0,1]$, where $h(H)$ is the probability of choosing a set $H$. Similarly, a mixed Searcher strategy is given by a probability distribution $s:\mathcal{S}
\rightarrow [0,1]$ where $s(S)$ is the probability of choosing a set $S$. 

Before we turn to some specific games, we establish a general result. For a collection of subsets $\mathcal{A} \subseteq 2^{[n]}$, write $\mathcal A^\#$ for the set $\{A^c: A \in \mathcal{A} \}$ of complements of sets in $\mathcal{A}$. For example, if we take $\mathcal{A}$ to be the set $[n]^{(k)}$ of subsets of $[n]$ of cardinality $k$, then $\mathcal{A}^\#$ is the set $[n]^{(n-k)}$ of subsets of $[n]$ of cardinality $n-k$. Clearly, for any $\mathcal{A} \subseteq 2^{[n]}$, we have $(A^\#)^\#=A$.

\begin{lemma}\label{lem:sym}
Suppose that $h$ and $s$ are optimal mixed Hider and Searcher strategies, respectively, for  the game $\Gamma( \mathcal{H},\mathcal{S})$, and that the value of the game is $V$. Let $h^c$ and $s^c$ be strategies defined on subsets in $\mathcal{H}^\#$ and $\mathcal{S}^\#$ respectively, given by $h^c(H^c) = h(H)$ and $s^c(S^c) = s(S)$ for $H \in \mathcal{H}$ and $S \in \mathcal{S}$. Then

\begin{enumerate}[(i)]
\item For the game $\Gamma(\mathcal{H}^\#,\mathcal{S}^\#)$, the strategy $h^c$ is optimal for the Hider and the strategy $s^c$ is optimal for the Searcher. The value of the game is $V$.
    \item For the game $\Gamma(\mathcal{S}^\#,\mathcal{H})$, the strategy $s^c$ is optimal for the Hider and $h$ is optimal for the Searcher. The value of the game is $c([n])-V$.
    \end{enumerate}
\end{lemma}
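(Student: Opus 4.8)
The plan is to reduce everything to two elementary identities for the symmetric difference under complementation, together with the additivity $c(A^c)=c([n])-c(A)$. First I would record that for all $H,S\subseteq[n]$,
\[
H^c \triangle S^c = H \triangle S \qquad\text{and}\qquad H^c \triangle S = H \triangle S^c = (H \triangle S)^c.
\]
The first holds because a point lies in exactly one of $H^c,S^c$ exactly when it lies in exactly one of $H,S$; the second because a point lies in exactly one of $H^c,S$ exactly when it lies in neither or both of $H,S$. These two rules, with additivity of $c$, are the whole engine of the proof: part (i) will use the first (both sets complemented, payoff unchanged) and part (ii) the second (one set complemented, payoff reflected).

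For part (i) I would argue that complementation is a payoff-preserving isomorphism between $\Gamma(\mathcal{H},\mathcal{S})$ and $\Gamma(\mathcal{H}^\#,\mathcal{S}^\#)$. Using the first identity, for every pure Searcher strategy $S^c\in\mathcal{S}^\#$ one has $P(h^c,S^c)=\sum_H h(H)\,c(S^c\triangle H^c)=\sum_H h(H)\,c(S\triangle H)=P(h,S)$, and symmetrically $P(H^c,s^c)=P(H,s)$. Since optimality of $h$ and $s$ in the original game means $P(h,S)\ge V$ for all $S\in\mathcal{S}$ and $P(H,s)\le V$ for all $H\in\mathcal{H}$, the transported strategies inherit exactly these guarantees against every pure strategy of the complemented game. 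Hence $h^c$ secures at least $V$, $s^c$ secures at most $V$, the two bounds meet, and the value is again $V$.

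For part (ii) the relevant game is $\Gamma(\mathcal{S}^\#,\mathcal{H})$, where the Hider draws from $\mathcal{S}^\#$ and the Searcher from $\mathcal{H}$, so exactly one set is complemented relative to the original. Writing $\tilde P$ for the payoff of this game, the second identity and additivity give
\[
\tilde P(S^c,H) = c(S^c \triangle H) = c\bigl((S \triangle H)^c\bigr) = c([n]) - c(S \triangle H) = c([n]) - P(H,S),
\]
so the new payoff is a constant minus the old one, which swaps the roles of maximizer and minimizer. Concretely, since $s$ is min-max in the original game, as a Hider strategy $s^c$ now guarantees $\tilde P(s^c,H)=c([n])-P(H,s)\ge c([n])-V$ for every $H\in\mathcal{H}$; and since $h$ is max-min, as a Searcher strategy it guarantees $\tilde P(S^c,h)=c([n])-P(h,S)\le c([n])-V$ for every $S^c\in\mathcal{S}^\#$. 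Both bounds meet at $c([n])-V$, which is the value, with $s^c$ and $h$ optimal.

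The only real subtlety, and the step I would watch most carefully, is bookkeeping: keeping straight which of the two identities applies (both sets complemented in (i) versus one in (ii)) and, in (ii), tracking the accompanying reversal of the minimizing and maximizing roles so that each inequality points the correct way. Once the payoff of each transformed game is written in terms of the original $P$, the optimality statements follow immediately from the definitions of min-max and max-min strategies, with no further computation required.
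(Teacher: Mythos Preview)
Your proof is correct and follows essentially the same approach as the paper: both rely on the two symmetric-difference identities $H^c\triangle S^c=H\triangle S$ and $H^c\triangle S=(H\triangle S)^c$ together with $c(A^c)=c([n])-c(A)$, and both verify the min-max and max-min guarantees by expanding the expected payoff against an arbitrary pure strategy. Your organization---stating both identities up front and flagging the role reversal in part~(ii)---is if anything a bit cleaner than the paper's, but the argument is the same.
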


\begin{proof}
For part (i), first note that for any $A,B \subseteq [n]$, we have $A \triangle B \equiv A^c \triangle B^c$. Hence, for any fixed Hider pure strategy $H^c \in \mathcal{H}^\#$, the strategy $s^c$ ensures a payoff in the game $\Gamma(\mathcal{H}^\#,\mathcal{S}^\#)$ of 
\[
P(H^c,s^c) = \sum_{S^c \in \mathcal{S}^\#} s^c(S^c) c(H^c \triangle S^c) = \sum_{S \in \mathcal{S}
} s(S) c(H \triangle S) = P(H,s) \le V,
\]
since $s$ is optimal in $\Gamma(\mathcal{H},\mathcal{S})$. Similarly, for a fixed Searcher pure strategy $S^c \in \mathcal{S}^\#$, the strategy $h^c$ ensures a payoff in the game $\Gamma(\mathcal{H}^\#,\mathcal{S}^\#)$ of 
\[
P(h^c,S^c) = \sum_{H^c \in \mathcal{H}^\#} h^c(H^c) c(H^c \triangle S^c) = \sum_{H \in \mathcal{H}} h(H) c(H \triangle S) = P(h,S) \ge V,
\]
since $h$ is optimal in $\Gamma(\mathcal{H},\mathcal{S})$. The proves part (i) of the lemma.

For part (ii), we first note that for any $A,B \subseteq [n]$, we have $c(A^c \triangle B) = c([n]) - c(A \triangle B)$. Therefore, if the Hider uses the strategy $s^c$ in the game $\Gamma(\mathcal{S}^\#,\mathcal{H})$ against an arbitrary Searcher strategy $H \in \mathcal{H}$, the expected payoff is
\begin{align*}
    P(s^c,H) &= \sum_{S^c \in \mathcal{S}^\#} s^c(S^c) c(S^c,H)\\ 
    &= \sum_{S \in \mathcal{S}} s(S) (c([n])-c(S,H)) \\
    &= c([n])-P(s,H) \\
    & \ge c([n])-V,
\end{align*}
by the optimality of $s$ in $\Gamma(\mathcal H, \mathcal S)$. Hence, the value of the game $\Gamma(\mathcal{S}^\#,\mathcal{H})$ is at least $c([n])-V$.

Similarly, if the Searcher uses the strategy $h$ in the game $\Gamma(\mathcal{S}^\#,\mathcal{H})$ against an arbitrary Hider strategy $S^c \in \mathcal{S}^\#$, the expected payoff is
\begin{align*}
    P(S^c,h) &= \sum_{H \in \mathcal{H}} h(H) c(S^c,H)\\ 
    &= \sum_{H \in \mathcal{H}} h(H) (c([n])-c(S,H)) \\
    &= c([n])-P(S,h) \\
    & \le c([n])-V,
\end{align*}
by the optimality of $h$ in $\Gamma(\mathcal H, \mathcal S)$. Hence, the value of the game $\Gamma(\mathcal{S}^\#,\mathcal{H})$ is at most $c([n])-V$, and we have equality. This completes the proof.
\end{proof}

We will see the usefulness of Lemma~\ref{lem:sym} in the next subsection, when we solve the discrete alignment game in the case where one player must choose a subset of fixed cardinality $k$, and the other player can choose any subset. It follows from part (i) of Lemma~\ref{lem:sym} that if we can solve the game for $k \le n/2$, then the solution for $k \ge n/2$ follows. It follows from part (ii) of the lemma that if we can solve the game when one player's subset has fixed cardinality, then the solution when the other player's subset has fixed cardinality automatically follows.

\subsection{Fixed Number of Searches or Hiding Locations}

Noting that we have already solved the game with equal costs and penalties in the special case of $\mathcal{H}=\mathcal{S}=2^{[n]}$ in the previous section, we start our analysis by examining the case where the Hider chooses a fixed number $k$ of locations and the Searcher can choose any number of locations. In other words, $\mathcal{H}$ is the set $[n]^{(k)}$ of all subsets of $[n]$ of cardinality $k$, and $\mathcal{S}=2^{[n]}$. 

We assume, without loss of generality that $c_1 \ge c_2 \ge \cdots \ge c_n \ge 0$. Furthermore, we denote the value of the game by $V_k$, for each $k=0,1\ldots,n$. Our next theorem gives a full solution.

\begin{theorem}
\label{thm:k}

Consider the alignment game $\Gamma([n]^{(k)},2^{[n]})$. The value $V_k$ of the game is given by
\[
V_k = \begin{cases}
    c([2k])/2~\text{ if } k \le n/2, \\
    c([2(n-k)])/2~\text{ if } k \ge n/2.
\end{cases}
\]
The optimal strategies depend on $k$ as follows. First suppose $k \le n/2$. Let $\tilde{H}$ be any fixed subset of $[2k]$ of cardinality $k$.  It is optimal for the Hider to use the strategy $h_k$ that chooses $\tilde{H}$ and $[2k] \setminus \tilde{H}$, each with probability $1/2$. It is optimal for the Searcher to use the strategy $s_k$ that chooses the empty set $\emptyset$ with probability 
\[
p_0 \equiv \frac{1}{2} + \frac{c_{2k}}{2 c_{1}},
\]
and to choose the set $[j]$ for $j=1,\ldots,2k-1$ with probability
\[
p_j \equiv \frac{c_{2k}}{2c_{j+1}} - \frac{c_{2k}}{2c_j}.
\]
Now suppose $k \ge n/2$. Then the strategy $h_k$, defined as $h_{n-k}^c$ is optimal for the Hider and the strategy $s_k$, defined as $s_{n-k}^c$ is optimal for the Searcher. 

\end{theorem}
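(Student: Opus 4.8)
The plan is to treat $k \le n/2$ as the core case and to obtain $k \ge n/2$ for free from Lemma~\ref{lem:sym}. Since $([n]^{(k)})^\# = [n]^{(n-k)}$ and $(2^{[n]})^\# = 2^{[n]}$, applying part (i) of Lemma~\ref{lem:sym} to the already-solved game $\Gamma([n]^{(n-k)},2^{[n]})$ (where $n-k \le n/2$) shows that $\Gamma([n]^{(k)},2^{[n]})$ has the same value $V_{n-k} = c([2(n-k)])/2$ and that $h_{n-k}^c,\, s_{n-k}^c$ are optimal, exactly as claimed. So it remains to solve $k \le n/2$ by exhibiting strategies guaranteeing $V_k = c([2k])/2$ from each side; the two bounds then coincide and certify both optimality and the value. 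I would first record that $s_k$ is a genuine distribution: $p_j = \tfrac{c_{2k}}{2}\bigl(\tfrac{1}{c_{j+1}} - \tfrac{1}{c_j}\bigr) \ge 0$ because $c$ is nonincreasing and positive, and $p_0 + \sum_{j=1}^{2k-1} p_j = 1$ by a telescoping cancellation that reduces the sum to $\tfrac{c_{2k}}{2c_{2k}} - \tfrac{c_{2k}}{2c_1}$.

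For the Hider's guarantee (value at least $V_k$), I would evaluate $\tfrac12 c(\tilde H \triangle S) + \tfrac12 c(([2k]\setminus\tilde H)\triangle S)$ against an arbitrary Searcher set $S$, location by location. Each $i \in [2k]$ lies in exactly one of $\tilde H$ and $[2k]\setminus\tilde H$, so it is exposed in exactly one of the two symmetric differences irrespective of whether $i \in S$; its expected contribution is thus $c_i/2$. Each $i \notin [2k]$ contributes $c_i$ precisely when $i \in S$, giving expected contribution $c_i[i\in S] \ge 0$. Summing yields $\tfrac12 c([2k]) + \sum_{i \in S \setminus [2k]} c_i \ge V_k$, with equality when $S \subseteq [2k]$; in particular the bound is independent of the chosen $\tilde H$.

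The substantive direction is the Searcher's guarantee, where $s_k$ is supported on the nested prefixes $\emptyset = [0] \subset [1] \subset \cdots \subset [2k-1]$. Writing $J$ for the random prefix length, a location $i$ is exposed in $[J]\triangle H$ exactly when ($i \le J$ and $i \notin H$) or ($i > J$ and $i \in H$), so its expected contribution is $c_i\Pr[J \ge i]$ if $i \notin H$ and $c_i\Pr[J \le i-1]$ if $i \in H$. The key computation is that $F(i) := \Pr[J \le i-1]$ telescopes to the clean form $F(i) = \tfrac12 + \tfrac{c_{2k}}{2c_i}$ for $1 \le i \le 2k$ (and $F(i)=1$ for $i>2k$). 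Splitting $[n]$ into $[2k]$ and its complement for a fixed $H$ with $|H|=k$, the $\tfrac12 c_i$ parts of $F$ and $1-F$ combine over $[2k]$ to give exactly $\tfrac12 c([2k])$, while the $\tfrac{c_{2k}}{2c_i}$ parts collapse to $c_{2k}(a-k)$ with $a = |H \cap [2k]|$. With $b = k-a = |H \setminus [2k]|$, this gives
\[
P(s_k,H) = \tfrac12 c([2k]) - b\,c_{2k} + \sum_{i \in H,\, i > 2k} c_i.
\]
Since $c$ is nonincreasing, each $c_i$ with $i>2k$ is at most $c_{2k}$, so the last sum is at most $b\,c_{2k}$ and $P(s_k,H) \le \tfrac12 c([2k]) = V_k$, completing the upper bound.

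I expect the main obstacle to be precisely this Searcher aggregation: deriving the telescoped form of $F(i)$ and then tracking how the $\tfrac{c_{2k}}{2c_i}$ contributions of in-$H$ and out-of-$H$ locations cancel down to the single discrepancy term $-b\,c_{2k} + \sum_{i>2k,\, i\in H} c_i$. This is also exactly where the ordering hypothesis $c_1 \ge \cdots \ge c_n$ is used, which explains structurally why an optimal Hider never gains by placing mass outside the $2k$ highest-cost locations, and why the solution concentrates there.
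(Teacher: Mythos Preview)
Your proof is correct and follows essentially the same approach as the paper: both verify $s_k$ is a probability distribution via telescoping, establish the Hider's lower bound by noting each $i\in[2k]$ is exposed in exactly one of $\tilde H\triangle S$ and $([2k]\setminus\tilde H)\triangle S$ while $i\notin[2k]$ contributes only nonnegatively, and establish the Searcher's upper bound by computing $\Pr[J\le i-1]=\tfrac12+\tfrac{c_{2k}}{2c_i}$ for $i\le 2k$ and then using $|H|=k$ together with $c_i\le c_{2k}$ for $i>2k$ to bound the discrepancy term. The case $k\ge n/2$ is likewise handled in the paper exactly as you propose, via part~(i) of Lemma~\ref{lem:sym}.
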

\begin{proof}
First suppose that $k \le n/2$. It is easy to verify that the Searcher strategy $s$ is well defined, since $\sum_{j=0}^{2k-1}p_j$ is telescopic and reduces to $1/2 + c_{2k}/(2c_{2k})=1$.

Let $S$ be any fixed Searcher strategy and write $S=A \cup B$, where $A \subseteq [2k]$ and $B \subseteq [2k]^c$. The payoff against $h_k$ is
\begin{align*}
P(h_k,S) &= \frac{1}{2} c(\tilde{H} \triangle S) + \frac{1}{2} c(([2k] \setminus \tilde{H}) \triangle S) \\
&= \frac{1}{2} \left(c(\tilde{H} \triangle A) + c(([2k] \setminus \tilde{H}) \triangle A)\right) + c(B) \\
&= c([2k])/2 + c(B) \\
&\ge c([2k])/2.
\end{align*}
So the strategy $h_k$ guarantees an expected payoff of at least $ c([2k])/2$ and we have ${V_k \ge  c([2k])/2}$.

Now suppose $H$ is any fixed Hider strategy and write $H = C \cup D$, where $C \subseteq [2k]$ and $D \subseteq [2k]^c$. Note that any location $j \in C$ is missed by the Searcher with probability ${\sum_{i=0}^{j-1} p_i = 1/2 + c_{2k}/(2c_j)}$ and any location $j \in [2k] \setminus C$ is chosen by the Searcher with probability $\sum_{i=j}^{2k-1} p_i = 1/2 - c_{2k}/(2c_j)$. Any $j \in D$ is missed by the Searcher with probability~$1$. Using these observations and $c(D) \le |D|c_{2k}$,
\begin{align*}
P(H,s_k) &= c(D) + \sum_{j \in C} c_j \sum_{i=0}^{j-1} p_i + \sum_{j \in [2k] \setminus C} c_j \sum_{i=j}^{2k-1} p_i \\
& \le |D|c_{2k} + \sum_{j \in C} c_j \left(\frac{1}{2} + \frac{c_{2k}}{2c_j}\right) + \sum_{j \in [2k] \setminus C} c_j \left( \frac{1}{2} - \frac{c_{2k}}{2c_j} \right) \\
&=|D|c_{2k} + \frac{c([2k])}{2} + \frac{c_{2k}}{2} (|C|-|[2k]\setminus C|) \\
&=\frac{c([2k])}{2} + \frac{c_{2k}}{2} (2|D| + |C|-(2k-|C|)) \\
&= \frac{c([2k])}{2}, 
\end{align*}
since $|C|+|D| = k$. So the strategy $s_k$ guarantees an expected payoff of at most $ c([2k])/2$ and we have $V_k \le  c([2k])/2$. 

Now suppose that $k \ge n/2$. Then we may express the game $\Gamma([n]^{(k)},2^{[n]})$ as $\Gamma(([n]^{(n-k)})^\#,(2^{[n]})^\#)$. By Lemma~\ref{lem:sym}, part (i), the strategy $h_{n-k}^c$ is optimal for the Hider, and the strategy $s_{n-k}^c$ is optimal for the Searcher. The value of the game is $V_{n-k}=c([2(n-k)])/2$.
\end{proof}

\begin{figure}[t]
    \centering
    \begin{tikzpicture}[scale=1, transform shape,
        box/.style={draw, rectangle, minimum height=1cm, minimum width=1cm, align=center}]
        % Nodes for locations with costs and indices
        \node[box] (b1) at (0,0) {\shortstack{$c_1$}};
        \node[box] (b2) at (1.7,0) {\shortstack{$c_2$}};
        \node[box] (b3) at (3.4,0) {\shortstack{$c_3$}};
        \node[box] (b4) at (5.1,0) {\shortstack{$c_4$}};
        \node[box] (b5) at (6.8,0) {\shortstack{$c_5$}};

        % Highlight regions using background nodes with thick, dashed borders
        \begin{scope}[on background layer]
            % Red outline for the Hider's focus region [2k]
            \node[draw=red, ultra thick, dashed, fit=(b1) (b2) (b3) (b4), inner sep=8pt, rounded corners] (hider_region) {};
            % Blue outline for the range of Searcher's base sets [j]
            \node[draw=blue, ultra thick, dashed, fit=(b1) (b2) (b3), inner sep=5pt, rounded corners] (searcher_region) {};
        \end{scope}

        % Add labels for the regions
        \node[above=2pt of hider_region.north, align=center, color=red] {Hider considers the $[4]$ set split in two};
        \node[below=2pt of hider_region.south, align=left, color=blue] {Searcher Base Sets: ($\emptyset, [1],[2],[3]$)};

        % Optional: Add a note about Location 5
        \node[above right=-0.15cm and 0.2cm of b5.south east, align=left, font=\small, color=grey] {Outside $[4]$, \\ never in play.};

    \end{tikzpicture}
    \caption{Visualization of relevant location sets for optimal strategies in $G_k$ with $n=5, k=2$ ($2k=4$). The red dashed box highlights the set $[2k]$ where the Hider concentrates his strategy. The blue dashed box highlights the locations $\{1, \ldots, 2k-1\}$ that form the maximal extent of the Searcher's base sets ($\emptyset, [1], \ldots, [2k-1]$). Note that the cheapest location is never visited, and the Hider optimal strategy includes a location that will not be included in the Searcher's strategy.}
    \label{fig:strategy_regions}
\end{figure}
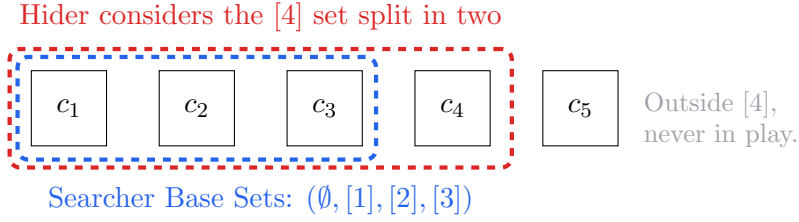

We can visualize a simple case where $n=5$ and $k=2$ in Figure \ref{fig:strategy_regions}. Note that the Hider will consider hiding in the most expensive $2k=4$ boxes. The Searcher would then choose to inspect $\emptyset$ with probability $\frac{1}{2}+\frac{c_4}{2c_3}$; $\{1\}$ with probability $\frac{c_4}{2c_2}+\frac{c_4}{2c_1}$; $\{1,2\}$ with probability $\frac{c_4}{2c_3}+\frac{c_4}{2c_2}$; $\{1,2,3\}$ with probability $\frac{c_4}{2c_4}+\frac{c_4}{2c_3}$.

Note that if $n$ is even and $k=n/2$ (so that the value of the game is $c([n])/2$), the Searcher has another simple optimal strategy that makes an equiprobable choice between any fixed $S \subseteq [n]$ and its complement. Against any Hider strategy $H$, the expected cost of such a strategy is $(1/2)c(S \triangle H) + (1/2)c(S^c \triangle H) = c([n])/2$.

Now consider the variant of the game where the Searcher must choose $k$ locations and the Hider can choose any number of locations, so that $\mathcal{H}=2^{[n]}$ and $\mathcal{S}=[n]^{(k)}$. The following theorem follows immediately from Lemma~\ref{lem:sym}.

\begin{theorem}
    For the discrete alignment game  $\Gamma(2^{[n]},[n]^{(k)})$, it is optimal for the Hider to use the strategy $s_k^c$ and for the Searcher to use strategy $h_k$. The value of the game is $c([n])- V_{k}$.
\end{theorem}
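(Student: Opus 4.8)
The plan is to obtain this result as an immediate corollary of part (ii) of Lemma~\ref{lem:sym}, feeding in the complete solution of the companion game $\Gamma([n]^{(k)},2^{[n]})$ established in Theorem~\ref{thm:k}. The one structural fact that makes the reduction work is that the power set is closed under complementation, i.e. $(2^{[n]})^\# = 2^{[n]}$, so that applying the complement operation to the Searcher's collection leaves us inside the same family of admissible strategy sets and simply swaps the two games.

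First I would set $\mathcal{H} = [n]^{(k)}$ and $\mathcal{S} = 2^{[n]}$. By Theorem~\ref{thm:k}, the game $\Gamma(\mathcal{H},\mathcal{S})$ has value $V = V_k$, with $h_k$ optimal for the Hider and $s_k$ optimal for the Searcher. Lemma~\ref{lem:sym}(ii) then speaks about the game $\Gamma(\mathcal{S}^\#,\mathcal{H})$. Substituting $\mathcal{S}^\# = (2^{[n]})^\# = 2^{[n]}$ and $\mathcal{H} = [n]^{(k)}$, this game is precisely $\Gamma(2^{[n]},[n]^{(k)})$, the game whose solution is sought.

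Applying the conclusion of Lemma~\ref{lem:sym}(ii) verbatim, the strategy $s^c = s_k^c$ is optimal for the Hider, the strategy $h = h_k$ is optimal for the Searcher, and the value equals $c([n]) - V = c([n]) - V_k$. This reproduces the theorem statement exactly, so no additional argument is needed.

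I expect essentially no obstacle here once the complement-invariance of the power set is noticed; the substantive work has already been carried out in Theorem~\ref{thm:k} and Lemma~\ref{lem:sym}. The only point requiring a moment's attention is the bookkeeping of the two players' roles: in the transformed game $\Gamma(\mathcal{S}^\#,\mathcal{H})$ the (complemented) Searcher-optimal strategy of the original game becomes the Hider's optimal strategy, while the original Hider-optimal strategy becomes the Searcher's, which is consistent with the roles asserted in the statement.
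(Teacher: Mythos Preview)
Your proposal is correct and follows exactly the paper's own argument: invoke Theorem~\ref{thm:k} for $\Gamma([n]^{(k)},2^{[n]})$, use the invariance $(2^{[n]})^\# = 2^{[n]}$ to identify the target game with $\Gamma(\mathcal{S}^\#,\mathcal{H})$, and then read off the optimal strategies and value from Lemma~\ref{lem:sym}(ii). There is nothing to add or correct.
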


\begin{proof}
    By Theorem~\ref{thm:k}, the strategies $h_k$ and $s_k$ are optimal for the Hider and Searcher, respectively in the game $\Gamma([n]^{(k)},2^{[n]})$, and the value of the game is $V_k$. By Lemma~\ref{lem:sym}, part~(ii), the value of the game $\Gamma(2^{[n]},[n]^{(k)})=\Gamma(2^{[n]\#}, [n]^{(k)})$ is equal to $c([n])-V_k$, and the strategies $s_k^c$ and $h_k$ are optimal for the Hider and Searcher, respectively.
\end{proof}

As one would expect, the value $c([n])-V_k$ of the game $\Gamma(2^{[n]},[n]^{(k)})$ in which the Searcher is restricted to choosing $k$ locations is higher than the value $V_k$ of the game $\Gamma([n]^{(k)},2^{[n]})$ in which the Hider is restricted to choosing $k$ locations, since (for $k \le n/2$)
\[
(c([n])-V_k)-V_k = c([n])-c([2k]) \ge 0,
\]
and similarly for $k \ge n/2$. This is because restricting the Searcher's strategy set and expanding the Hider's strategy set can only benefit the Hider.

It is also worth pointing out that for the game $\Gamma(2^{[n]},2^{[n]})$, where both players are unrestricted, the value is $c([n])/2$, as pointed out at the end of Subsection~\ref{sec:any}. This value is lower that of the game $\Gamma(2^{[n]},[n]^{(k)})$ (when the Searcher is restricted) and higher than that of the game $\Gamma([n]^{(k)},2^{[n]})$ (when the Hider is restricted).

Evidently, for the game $\Gamma([n]^{(k)},[n]^{(k)})$, where {\em both} players are restricted to choosing subsets of size $k$, this is an disadvantage to the Hider compared to $\Gamma(2^{[n]},[n]^{(k)})$ and a disadvantage to the Searcher compared to  $\Gamma([n]^{(k)},2^{[n]})$. Thus, the value of the game $\Gamma([n]^{(k)},[n]^{(k)})$ must lie between $V_k$ and $c([n])-V_k$.

\subsection{Fixed Number of Searches and Hiding Locations}
\label{sec:fixed_searchers_and_hiding_locations}

Finally, we turn to the variant $\Gamma([n]^{(k)},[n]^{(k)})$ of the discrete alignment game where both players choose a subset of fixed size. This variant turns out to be the most complex version of the game, and we present here a solution only for $k=1$, leaving other cases for future work. 

For $k=1$, the payoff structure for the game is particularly simple: if the Searcher chooses some $i$ and the Hider chooses some $j$, then $P(i,j)$ is equal to $c_i + c_j$ if $i \neq j$, otherwise $P(i,j)=0$ if~$i=j$. As before, we assume, without loss of generality, that $c_1 \geq c_2 \geq \ldots \geq c_n$.

If $n=2$, the game has a simple solution: both players choose each strategy with probability $1/2$, and the value of the game is $(c_1+c_2)/2$. In the case that $c_1=c_2=1$, this game is equivalent to Matching Pennies, as mentioned in the Introduction. In this game, Players 1 and 2 both choose heads or tails. If they match, Player 1 gets a payoff of 1; if not he gets a payoff of -1.

For the remainder of this subsection we will assume that $n \ge 3$. We first define the strategies $h_M$ and $s_M$ for the Hider and Searcher, respectively, that we will subsequently show are optimal. 

Let $M=2,\ldots,n-1$ be maximal such that
\begin{align}
\sum_{i=1}^{M} \frac{1}{c_i} \ge \frac{M-2}{c_n}. \label{eq:M}
\end{align}

Note that~(\ref{eq:M}) holds for $M=2$, since
\[
\sum_{i=1}^2 \frac{1}{c_i} \ge 0 = \frac{2-2}{c_n}.
\]
Therefore, $M$ is well defined.

There are two cases.

\textbf{Case 1: $M=n-1$.} Let $h_{n-1}$ denote the Hider strategy that chooses $j=1,\ldots,n$ with probability $p_j$, given by
\[
p_j = \frac{1}{2} - \frac{(n-2)}{2c_j\sum_{i=1}^n 1/c_i}.
\]
Note that for any $j=1,\ldots,n$, we have
    \[
\sum_{i=1}^n \frac{1}{c_i} = \frac{1}{c_n} + \sum_{i=1}^M \frac{1}{c_i} \ge \frac{1}{c_j}+\frac{M-2}{c_n}= \frac{n-2}{c_n}, 
    \]
where the inequality follows from the definition of $M$ and the fact that $c_n \le c_j$. It follows that for each $j=1,\ldots,n$, the probability $p_j$ is non-negative (and it is clearly less than $1$).

The Searcher strategy $s_{n-1}$ is defined identically to $h_{n-1}$; that is, $j$ is chosen with probability $p_j$, as defined above.

\textbf{Case 2: $M \le n-2$.} Let $h_M$ denote the Hider strategy that, for $j=1,\ldots,M$ chooses $j$ with probability $p_j$, given by
\[
p_j= \frac{1}{2} - \frac{c_n}{2c_j},
\]
and chooses $M+1$ with probability 
\[
p_{M+1} = 1- \sum_{j=1}^{M} p_j =- \frac{M-2}{2} + \sum_{j=1}^{M} \frac{c_n}{2c_j}.
\]
It is clear that for $j=1,\ldots,M$, the probability $p_j$ is well defined, and the non-negativity of $p_{M+1}$ follows from the definition of $M$.

Let $s_M$ denote the Searcher strategy that, for $i=1,\ldots,M$ chooses $i$ with probability $q_j$, given by
\[
q_j= \frac{1}{2} - \frac{c_{M+1}}{2c_j},
\]
and chooses $n$ with probability 
\[
q_n = 1- \sum_{j=1}^{M} q_j = - \frac{M-2}{2} + \sum_{j=1}^{M} \frac{c_{M+1}}{2c_j}.
\]

\begin{theorem}
The strategies $h_M$ and $s_M$ are optimal. If $M=n-1$ (Case 1), then the value of the game is
\[
V_1 \equiv \frac{1}{2} \sum_{i=1}^n c_i - \frac{(n-2)^2}{2 \sum_{i=1}^n 1/c_i}.
\]
If $M \le n-2$ (Case 2), then $h_M$ and $s_M$ are optimal and the value of the game is 
\[
V_2 \equiv \frac{-(M-2)(c_n+c_{M+1})}{2} + \frac{1}{2}\sum_{i=1}^{M} \left( c_i + \frac{ c_{M+1} c_n}{c_i} \right).
\]
\end{theorem}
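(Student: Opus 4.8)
The plan is to verify directly that each proposed strategy caps the opponent's achievable payoff at the claimed value, exploiting the fact that against a \emph{pure} opponent the expected payoff has a clean form. If a player mixes according to probabilities $(p_j)$ and the opponent plays the pure strategy $i$, then since $P(i,j)=c_i+c_j$ for $i\neq j$ and $P(i,i)=0$, the expected payoff is
\[
\sum_j p_j P(i,j) = c_i(1-p_i) + \Big(\bar c - p_i c_i\Big) = c_i + \bar c - 2 p_i c_i,
\]
where $\bar c = \sum_j p_j c_j$ is the mean cost of the mixing distribution. I would establish this identity first, since it drives every subsequent step: the term $-2p_i c_i$ is precisely the lever a player uses to flatten the opponent's payoff across locations. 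It suffices to show the Hider's strategy makes this quantity $\ge V$ for all pure Searcher responses $i$, and the Searcher's strategy makes the analogous quantity $\le V$ for all pure Hider responses $j$.

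For Case~1 ($M=n-1$), substituting $p_i=\tfrac12-\tfrac{n-2}{2c_iS}$ with $S=\sum_i 1/c_i$ into the identity cancels the $c_i$-dependence entirely, leaving $\bar c + (n-2)/S$, independent of $i$; a short simplification identifies this constant with $V_1$. Because the Hider and Searcher use the identical strategy and $P$ is symmetric in the two arguments, the same computation simultaneously shows the Hider guarantees at least $V_1$ and the Searcher guarantees at most $V_1$, so $V_1$ is the value.

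For Case~2 ($M\le n-2$) I would treat the players separately. On the Hider side, plugging $p_j=\tfrac12-\tfrac{c_n}{2c_j}$ (for $j\le M$) into the identity gives the constant payoff $\bar c + c_n$ for every $i\le M$; for any $i\ge M+2$ we have $p_i=0$, so the payoff is $c_i+\bar c\ge c_n+\bar c$; hence the only location needing a separate check is $i=M+1$. There I would prove the payoff is $\ge \bar c + c_n$ using the \emph{maximality} of $M$: since $M+1$ violates~(\ref{eq:M}), we get $\sum_{i=1}^M 1/c_i < (M-1)/c_n - 1/c_{M+1}$, which rearranges exactly into the required inequality. This pins the Hider's guarantee at $\bar c + c_n$, and expanding $\bar c$ confirms $\bar c + c_n = V_2$. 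The Searcher side is the mirror image with $c_n$ and $c_{M+1}$ interchanged: against $j\le M$ the payoff is the constant $\bar c_q + c_{M+1}$ (where $\bar c_q=\sum_i q_i c_i$), for $M+1\le j\le n-1$ it is $c_j+\bar c_q\le c_{M+1}+\bar c_q$, and the single boundary check is $j=n$, where the required bound reduces to $\sum_{i=1}^M 1/c_i \ge (M-1)/c_{M+1} - 1/c_n$; this follows because~(\ref{eq:M}) \emph{holds} at $M$ together with $c_n\le c_{M+1}$. A final expansion gives $\bar c_q + c_{M+1} = V_2$, so the two guarantees coincide and $V_2$ is the value.

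The main obstacle is the two boundary checks at the ``seam'' of the support --- $i=M+1$ for the Hider and $j=n$ for the Searcher --- since these are the only responses whose payoff is not forced to equal the value by the flattening construction. The whole content of the argument is that $M$ is chosen so that~(\ref{eq:M}) \emph{holds at $M$} and \emph{fails at $M+1$}, and these two facts are exactly what is needed to sign the two boundary payoffs in opposite directions. Everything else is bookkeeping: the well-definedness and non-negativity of the probabilities (already verified preceding the statement) and the routine algebraic simplifications identifying the flat payoffs with $V_1$ and $V_2$.
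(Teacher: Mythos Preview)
Your proposal is correct and follows essentially the same approach as the paper: both verify optimality by computing the expected payoff against each pure response, showing the payoff is constant on the bulk of the support and then handling the boundary cases ($i=M+1$ for the Hider, $j=n$ for the Searcher) via the defining property that~(\ref{eq:M}) holds at $M$ and fails at $M+1$. Your upfront identity $c_i+\bar c-2p_ic_i$ packages the computation a bit more cleanly than the paper's case-by-case derivation, but the substance is identical.
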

\begin{proof}
    We begin with Case 1, when $M=n-1$. It is easy to check that if the Hider plays $h_{n-1}$ against an arbitrary strategy $k$ of the Searcher, the expected payoff is 
    \begin{align*}
        P(h_{n-1},k) & =\sum_{j \neq k} p_j(c_j + c_k) \\
        & = (1-p_k)c_k + \sum_{j \neq k} p_j c_j \\
        & = \frac{c_k}{2} + \frac{n-2}{2 \sum_{i=1}^n 1/c_i} + \sum_{j \neq k} \left(\frac{c_j}{2} - \frac{n-2}{2 \sum_{i=1}^n 1/c_i} \right) \\
        & =V_1 .
    \end{align*}
    
    Since $s_{n-1}$ is equal to $h_{n-1}$, the same payoff is guaranteed by $s_{n-1}$ against any Hider strategy, so this payoff is equal to the value of the game, and $h_{n-1}$ and $s_{n-1}$ are optimal.

    Turning to Case 2, we first consider the Hider strategy $h_M$. Against the Searcher strategy $n$, the expected payoff is
    \begin{align*}
        P(h_M, n) &= \sum_{i \le M+1} p_i (c_i +c_n) \\
        &= c_n + \left(- \frac{M-2}{2} + \sum_{i=1}^{M} \frac{c_n}{2c_i} \right)c_{M+1} + \sum_{i \le M} \left( \frac{1}{2} - \frac{c_n}{2c_i} \right)c_i \\
        &=V_2.
    \end{align*}
    If the Searcher plays some strategy $j \le M$, then the expected payoff is
    \begin{align*}
    P(h_M, j) & = \sum_{i \le M+1, i\neq j} p_i (c_i +c_j) \\
    &= (1-2p_j)c_j - c_n + \sum_{i \le M+1} p_i(c_i +c_n)\\
    &= P(h_M, n)+2c_{j} \left( \frac{1}{2} - \frac{c_n}{2 c_{j}} - p_{j}\right) \\
    & = V_2,
    \end{align*}
by the definition of $p_j$ and the earlier calculation of $P(h_M,n)$.
    
If the Searcher chooses some $j$ with $M+2 \le j \le n-1$, then the payoff is
    \[
    \sum_{i \le M+1} p_i(c_i+c_j) = c_j-c_n+ \sum_{i \le M+1} p_i(c_i+c_n) \ge P(h_M,n)=V_2.
    \]
    For the remaining case that the Searcher plays $M+1$, we first show that $p_{M+1} \le 1/2 - c_n/(2c_{M+1})$. Indeed,
    \begin{align*}
      \left( \frac{1}{2} - \frac{c_n}{2c_{M+1}} \right)- p_{M+1} &= \frac{c_n}{2} \left(\frac{1}{c_n} - \frac{1}{c_{M+1}} + \frac{M-2}{c_n} - \sum_{i=1}^{M} \frac{1}{c_i} \right) \\
      & = \frac{c_n}{2} \left( \frac{M-1}{c_n} - \sum_{i=1}^{M+1} \frac{1}{c_i} \right) \\
      & \ge 0,
    \end{align*}
    by definition of $M$. We now calculate
    \begin{align*}
        P(h_M,M+1) &= \sum_{i=1}^M p_i(c_i+c_{M+1}) \\
        & = (1-2p_{M+1})c_{M+1} - c_{n} + \sum_{i =1}^{M+1} p_i(c_i +c_n) \\
        & = V_2 +2c_{M+1} \left( \frac{1}{2} - \frac{c_n}{2 c_{M+1}} - p_{M+1}\right) \\
        & \ge V_2.
    \end{align*}
We now consider the Searcher strategy $s_M$. Against the Hider strategy $M+1$, the expected payoff is
\begin{align*}
    P(M+1,s_M) &= q_n (c_{M+1}+c_n) + \sum_{i=1}^M q_i(c_{M+1}+c_i) \\
    &= c_{M+1}+\left(- \frac{M-2}{2} + \sum_{i=1}^{M} \frac{c_{M+1}}{2c_i} \right) c_n + \sum_{i=1}^M \left( \frac{1}{2} - \frac{c_{M+1}}{2c_i} \right) c_i \\
    &=V_2.
\end{align*}
If the Hider plays some $j=1,\ldots,M$, then the expected payoff against $s_M$ is
\begin{align*}
    P(j, s_M) &= q_n(c_j + c_n) + \sum_{i\le M, i\neq j} q_i(c_i+c_j) \\
    &= P(M+1,s_M) + 2c_j \left(\frac{1}{2} - \frac{c_{M+1}}{2c_j} - q_j \right) \\
    &= V_2.
\end{align*}
If the Hider plays some strategy $j$ with $M+2 \le j \le n-1$, then the expected payoff against $s_M$ is
\[
P(j, s_M) = q_n(c_j+c_n)+\sum_{i = 1}^M q_i(c_i+c_j) = c_j - c_{M+1} + P(M+1,s_M) \le P(M+1,s_M)=V_2.
\]
Finally, for the case the Hider plays strategy $n$ against $s_M$, we first show that $q_n \ge 1/2 - c_{M+1}/(2 c_n)$. Indeed,
\begin{align*}
    q_n - \left(\frac{1}{2} - \frac{c_{M+1}}{2c_n} \right) 
    &= \frac{c_{M+1}}{2} \left( -\frac{M-2}{c_{M+1}} + \sum_{i=1}^{M} \frac{1}{c_i} - \frac{1}{c_{M+1}} + \frac{1}{c_n} \right) \\
    &\ge \frac{c_{M+1}}{2} \left( -\frac{M-1}{c_{M+1}} + \frac{M-2}{c_n} + \frac{1}{c_n}\right) \\
    & \ge 0,
\end{align*}
where the first inequality follows from the definition of $M$ and then second follows from $c_n \le c_{M+1}$.

We now calculate
\begin{align*}
    P(n, s_M) &= \sum_{i = 1}^M q_i(c_i+c_n) \\
    &= P(M+1,s_M) +2c_n \left(\frac{1}{2} - \frac{c_{M+1}}{2 c_n} - q_n \right) \\
    & \le V_2,
\end{align*}
and the proof is complete.
\end{proof}

\section{Conclusions}

This paper introduces alignment games, a class of zero-sum games analyzing strategic problems where decision-makers face dual risks from miscalibrated intervention: commission errors (unnecessary action) and omission errors (necessary action forgone). Motivated by operational contexts including medical diagnostics, resource allocation, and monitoring problems, we develop a theoretical framework that abstracts the essential mathematical structure of such problems. alignment games extend traditional search game models by explicitly incorporating costs for both error types, providing a mathematical foundation for analyzing the trade-off between comprehensive coverage and precise targeting.

We derive closed-form equilibrium solutions across multiple settings. For continuous domains (Table \ref{tab:cont_game_summary}), when both players freely choose arc lengths on the unit circle, equilibrium involves $\alpha^* = c/(c+\pi)$ and $\beta^* = \pi/(c+\pi)$ with game value $c\pi/(c+\pi)$.  When one player's length is fixed, optimal strategies exhibit threshold behaviors at critical values $\alpha^*$ and $\beta^*$. For the unit interval under equal costs, strategic structures vary: when lengths are variable, optimal play often involves equiprobable choices between specific symmetric subintervals like $[0,1/2]$ and $[1/2,1]$. When interval lengths are fixed for both players, optimal strategies involve the Hider mixing over a minimal set of covering intervals, while the Searcher mixes over a maximal set of non-overlapping intervals.

\begin{table*}[t]
    \centering
    \caption{Summary of optimal strategies and game values in continuous alignment games}
    \label{tab:cont_game_summary}
    \footnotesize % Smaller font size for the table content
    \begin{tabular}{>{\RaggedRight}p{3.4cm} >{\RaggedRight}p{4.2cm} >{\RaggedRight}p{4.cm} >{\RaggedRight}p{3.5cm}}
    \toprule
        \textbf{Scenario} & \textbf{Hider's Strategy} & \textbf{Searcher's Strategy} & \textbf{Value} \\
        \midrule
        \rowcolor[gray]{0.9} % This sets the row color to a light gray
        \multicolumn{4}{l}{\textbf{Unit Circle}}  \\
        Both choose lengths& Choose length $\frac{c}{c+\pi}$ uniformly. & Choose length $\frac{\pi}{c+\pi}$ uniformly. & $\frac{c\pi}{c+\pi}$\\
        \hline
        Hider fixed $\alpha$, Searcher chooses $\beta$ & Fixed arc length $\alpha$, start uniformly. & Choose $\beta=0$ if $\alpha$ is small; $\beta=1$ if $\alpha$ is large. & $(1-\alpha)c$ or $\alpha\pi$ \\
        \hline
        Searcher fixed $\beta$, Hider chooses $\alpha$ & Choose $\alpha=1$ if $\beta$ is small; $\alpha=0$ if $\beta$ is large. & Fixed arc length $\beta$, start uniformly. & Analogous to above\\
        \hline
        Both fixed $\alpha$ and $\beta$ & Choose start uniformly. & Choose start uniformly. & Expected symmetric difference \\
        \midrule
        \rowcolor[gray]{0.9} % This sets the row color to a light gray
        \multicolumn{4}{l}{\textbf{Unit Interval (Equal Costs/Penalties)}} \\
        Both choose lengths & Mix equally: $[0,1/2]$ or $[1/2,1]$. & Mix equally: $[0,1/2]$ or $[1/2,1]$. & $1/2$ \\
        \hline
        Hider fixed $\alpha$, Searcher chooses $\beta$ & Mix equally: $[0,\alpha]$ or $[1-\alpha,1]$. & Choose empty set if $\alpha\leq1/2$; full interval if $\alpha\geq1/2$. & $\alpha$ or $1-\alpha$ \\
        \hline
        Searcher fixed $\beta$, Hider chooses $\alpha$ & Choose full interval if $\beta\leq1/2$; empty set if $\beta\geq1/2$. & Mix equally: $[0,\beta]$ or $[1-\beta,1]$. & $1-\beta$ or $\beta$ \\
        \hline
        Both fixed $\alpha=\beta$ & Minimal covering strategy. & Maximal non-overlapping strategy. & Depends on $\alpha$. \\
        \bottomrule
    \end{tabular}
\end{table*}

The discrete setting (summarized in Table~\ref{tab:disc_game_summary}) also yields comprehensive equilibrium characterizations across various structural assumptions. 
When players can choose any subset of $[n]$ with heterogeneous costs and penalties, optimal mixed strategies for both Hider and Searcher assign probabilities $p_H = \lambda \prod_{j \in H} (c_j/\pi_j)$, directly linking choice probabilities to element-specific cost-penalty ratios. 

When costs are equal but cardinality constraints are imposed, distinct strategic patterns emerge. For instance, if the Hider must choose exactly $k$ elements (and $k \le n/2$), optimal play remarkably concentrates on the $2k$ highest-cost locations; the Hider mixes over complementary $k$-subsets within this restricted domain, while the Searcher employs a calibrated mixture over nested subsets $\{[j] : j = 0, \ldots, 2k-1\}$. 
Symmetrical results, derivable through structural properties relating games via set complementation, extend these solutions to cases where $k > n/2$ or when the Searcher faces the cardinality constraint instead of the Hider. 
Finally, when both players are restricted to choosing exactly $k$ elements (e.g., $k=1$), optimal strategies involve threshold-based constructions contingent on the specific values and relative magnitudes of the sorted location costs.

\begin{table*}[t]
    \centering
    \caption{Summary of optimal strategies and game values in discrete alignment games}
    \label{tab:disc_game_summary}
    \footnotesize % Smaller font size for the table content
    \begin{tabular}{>{\RaggedRight}p{3.4cm} >{\RaggedRight}p{4.2cm} >{\RaggedRight}p{4.cm} >{\RaggedRight}p{3.5cm}}
    \toprule
        \textbf{Scenario} & \textbf{Hider's Strategy} & \textbf{Searcher's Strategy} & \textbf{Value} \\
        \midrule
        \rowcolor[gray]{0.9}
        \multicolumn{4}{l}{\textbf{Non-equal Costs/Penalties}} \\
        Both choose from all subsets of $[n]$ (the power set $2^{[n]}$) & Chooses subset $H$ with prob. $\propto \prod_{j \in H} (c_j/\pi_j)$. & Chooses subset $S$ with complementary prob. to the Hider. & Weighted sum of costs. \\
        \midrule
        \rowcolor[gray]{0.9}
        \multicolumn{4}{l}{\textbf{Equal Costs and Penalties}} \\
        Hider chooses $k$ locations, Searcher any & Mix over a $k$-subset and its complement within the top $2k$-cost items. & Mix over nested subsets of the top $2k-1$ cost items. & \vspace{-1cm} \[\begin{cases}
        \frac{c([2k])}{2}~\text{ if } k \le n/2, \\
        \frac{c([2(n-k)])}{2}~\text{ if } k \ge n/2.
            \end{cases}
            \] \\
        \hline
        Searcher chooses $k$ locations, Hider any & Uses the Searcher's strategy from the game above. & Uses the Hider's strategy from the game above. & Total cost minus dual game's value. \\
        \hline
        Both choose $k=1$ ($n=2$) & Mix equally. & Mix equally. & $(c_1+c_2)/2$ \\
        \hline
        Both choose $k=1$ ($n\geq3$) & Mix top $M+1$ cost items, where $M$ is a cost-dependent threshold. & Mix top $M$ cost items and the cheapest item. & Depends on the cost threshold $M$. \\
        \bottomrule
    \end{tabular}
\end{table*}

The theoretical analysis reveals several mathematically interesting properties. With unequal error costs, optimal strategies exhibit proportionality to cost-penalty ratios. With equal costs, the games exhibit threshold behaviors where optimal strategies change discontinuously as parameters cross critical values. These threshold effects demonstrate how the mathematical structure of the problem creates natural phase transitions in equilibrium behavior.

From a theoretical perspective, alignment games connect to established concepts in game theory and statistics. The framework recasts the Type I/Type II error trade-off from statistical hypothesis testing into a game-theoretic setting. Classical models emerge as special cases: Matching Pennies appears when players choose single elements from a two-element set, demonstrating how the framework generalizes fundamental zero-sum interactions.

The mathematical structures we developed provide a theoretical foundation for analyzing strategic problems involving intervention under uncertainty. While our models necessarily abstract from the complexity of real-world applications, they isolate and solve the fundamental mathematical structures underlying such decisions. The partial solutions for discrete games with equal cardinality constraints point toward deeper mathematical structures requiring further investigation. Natural theoretical extensions include providing complete solutions for discrete alignment games where both players must choose $k > 1$ elements, likely generalizing the threshold-based constructions observed for $k=1$, and extending the continuous framework to scenarios with differing fixed interval lengths ($\alpha \neq \beta$) under heterogeneous costs. Alignment games contribute to the game-theoretic literature by extending classical frameworks to address a fundamental class of strategic problems characterized by the tension between action and inaction under uncertainty.

\section*{Acknowledgments}
This material is based upon work supported by the Air Force Office of
Scientific Research under award number FA9550-23-1-0556.

\bibliographystyle{apalike}
\bibliography{thesis} 

\end{document}